\newtheorem{thm}{Theorem}[section]
\newtheorem{cor}{Corollary}[section]
\newtheorem{rem}{Remark}[section]
\newtheorem{ex}{Example}[section]
\newtheorem{defi}{Definition}[section]
\newtheorem*{thmA}{Theorem A}
\newtheorem{open problem}{Open problem}[section]
\newcommand{\pa}{\partial}
\newcommand{\ol}{\overline}
\newcommand{\be}{\begin{equation}}
\newcommand{\ee}{\end{equation}}
\newcommand{\bs}{\begin{small}}
\newcommand{\es}{\end{small}}
\newcommand{\beas}{\begin{eqnarray*}}
\newcommand{\eeas}{\end{eqnarray*}}
\newcommand{\bea}{\begin{eqnarray}}
\newcommand{\eea}{\end{eqnarray}}
\renewcommand{\epsilon}{\varepsilon}
\renewcommand{\phi}{\varphi}
\numberwithin{equation}{section}
\begin{document}
\title[Pre-Schwarzian and Schwarzian derivatives]{On the pre-Schwarzian and Schwarzian derivatives of log-harmonic mappings}
\author[R. Biswas and R. Mandal]{Raju Biswas and Rajib Mandal}
\date{}
\address{Raju Biswas, Department of Mathematics, Raiganj University, Raiganj, West Bengal-733134, India.}
\email{rajubiswasjanu02@gmail.com}
\address{Rajib Mandal, Department of Mathematics, Raiganj University, Raiganj, West Bengal-733134, India.}
\email{rajibmathresearch@gmail.com}
\let\thefootnote\relax
\footnotetext{2020 Mathematics Subject Classification: 31A05, 30C35, 30C45, 30C62.}
\footnotetext{Key words and phrases: Pre-Schwarzian and Schwarzian derivatives, log-harmonic mappings, univalence criterion.}
\begin{abstract}
In this paper, we introduce definitions of the pre-Schwarzian and the Schwarzian derivatives for any locally univalent log-harmonic mappings defined in the unit disk $\mathbb{D}=\{z\in\mathbb{C}: |z|<1\}$. We explore the properties and applications of these concepts in the context of geometric function theory, and we also establish a necessary and sufficient condition for a non-vanishing log-harmonic mapping having a finite pre-Schwarzian norm. Additionally, we establish a relationship between the pre-Schwarzian norm of a non-vanishing log-harmonic mapping and that of a certain analytic function in $\mathbb{D}$.
\end{abstract}
\maketitle
\section{Introduction}
\noindent In geometric function theory, the pre-Schwarzian and Schwarzian derivatives of locally univalent and analytic functions are currently a prominent technique for studying the geometric properties of these mappings. One of the potential applications of this approach can be used to determine the conditions for global univalence. The elegance of these derivatives has led to the extension of the theory to complex-valued harmonic mappings (see \cite{CDO2003,HM2015}) and non-vanishing log-harmonic mappings (see \cite{1LP2018, BHPV2022}). The theory of log-harmonic mapping has become an exciting field of research in the last few years. 
The natural follow-up question is,``can these derivatives be defined for any log-harmonic mapping, including those that vanish at the origin?''\\[2mm]
\indent In this paper, we answer this question affirmatively by introducing definitions of the pre-Schwarzian and Schwarzian derivatives for any locally univalent log-harmonic mapping defined in the unit disk $\mathbb{D}$. Our approach provides a unified framework that encompasses both vanishing and non-vanishing cases. We then explore their properties and applications, particularly in the context of geometric function theory. By establishing connections with existing results, we aim to provide a more profound understanding of these derivatives and their implications for the field.\\[2mm]
\indent An analytic function $f(z)$ in a domain $\Omega$ is said to be locally univalent if for each $z_0\in\Omega$, there exists a neighborhood $N(z_0)$ of 
$z_0$ such that $f(z)$ is univalent in $N(z_0)$. The Jacobian of a complex-valued function $f=u+i v$ is defined by $J_f(z)=|f_z|^2 - |f_{\overline{z}}|^2$. 
It is well known that the non-vanishing of the Jacobian is necessary and sufficient conditions for local univalence of analytic mappings (see \cite[Chapter 1]{D1983}). 
Let $\mathcal{S}$ denotes the class of all analytic and univalent function $f$ in $\mathbb{D}$ with normalization $f(0)=f'(0)-1=0$.  An analytic function $f$ defined in $\mathbb{D}$ is called a Bloch function (see \cite{ACP1974, P1970}) if
\beas \beta_f=\sup_{z\in\mathbb{D}}(1-|z|^2)|f'(z)|<\infty.\eeas
Let $\mathcal{B}$ be the class of all analytic functions $\omega:\mathbb{D}\rightarrow\mathbb{D}$ and $\mathcal{B}_0=\{\omega\in\mathcal{B} : \omega(0)=0\}$. 
Functions in $\mathcal{B}_0$ are called Schwarz function. According to Schwarz's lemma, if $\omega\in\mathcal{B}_0$, then $|\omega(z)|\leq |z|$ and $|\omega'(0)|\leq 1$. 
Strict inequality holds in both estimates unless $\omega(z)=e^{i\theta}z$, $\theta\in\mathbb{R}$. A sharpened form of the Schwarz lemma, known as the Schwarz-Pick lemma, 
gives the estimate  $|\omega'(z)|\leq (1-|\omega(z)|^2)/(1-|z|^2)$ for $z\in\mathbb{D}$ and $\omega\in\mathcal{B}$. \\[2mm]
\indent Let $f = u+i v$ be a complex-valued function of $z =x +i y$ in a simply connected domain $\Omega$. If $f\in C^2(\Omega)$ (continuous first and second partial derivatives in 
 $\Omega$) and satisfies the Laplace equation $\Delta f = 4 f_{z\ol{z}}=0$ in $\Omega$, then $f$ is said to be harmonic in $\Omega$, where 
$f_z=(1/2)(\pa f/\pa x -i \pa f/\pa y)$ and $f_{\ol{z}}=(1/2)(\pa f/\pa x +i \pa f/\pa y)$. Note that every harmonic
 mapping $f$ has the canonical representation $f =h +\ol{g}$, where $h$ and $g$ are analytic in $\Omega$, known respectively as the analytic and co-analytic parts of $f$. 
 This representation is unique up to an additive constant (see \cite{D2004}). The inverse function theorem and a result of Lewy \cite{L1936} shows that a harmonic
 function $f$ is locally univalent in $\Omega$ if, and only if, $J_f(z)\not= 0$ in $\Omega$. A harmonic mapping f is locally univalent
 and sense-preserving in $\Omega$ if, and only if, $J_f(z)> 0$ in $\Omega$. Note that $|f_z|\not=0$ whenever $J_f>0$. 
\subsection{Log-harmonic mappings}
A log-harmonic mapping $f$ defined in $\mathbb{D}$ is a solution of the nonlinear elliptic partial differential equation
\bea\label{eq1}
\frac{\overline{f_{\overline{z}}(z)}}{\overline{f(z)}}=\omega(z)\frac{f_z(z)}{f(z)},~z\in\mathbb{D},\eea
where the second complex dilatation function $\omega:\mathbb{D}\to\mathbb{D}$ is analytic and the Jacobian of $f$ is given by $J_f=|f_z|^2-|f_{\overline{z}}|^2=|f_z|^2(1-|\omega|^2)>0$, which shows that non-constant log-harmonic mapping $f$ is always sense-preserving in $\mathbb{D}$. If $f$ is non-constant and vanishes only at $z = 0,$ then $f$ admits the following representation
\begin{align*}
f(z) = z^m|z|^{2\beta m}h(z)\overline{g(z)},
\end{align*}
where $m$ is non-negative integer, $\beta=\ol{\omega(0)}(1+\omega(0))/\left(1-|\omega(0)|^2\right)$, {\it i.e.,} $\textrm{Re}(\beta)>-1/2$, and $h$ and $g$ are analytic functions in $\mathbb{D}$ satisfying $g(0)=1$ and $h(0)\neq 0$. If $f$ is univalent 
log-harmonic mapping in $\mathbb{D}$, then either $0\not\in f(\mathbb{D})$ and $\log(f)$ is univalent and harmonic in $\mathbb{D}$ or, if $f(0)=0$, then 
$f=z |z|^{2\beta} h\ol{g}$, where $\text{Re}(\beta)>-1/2$, $0\not\in hg(\mathbb{D})$ and $F(t)=\log(f(e^t))$ is univalent and harmonic on the half-plane 
$\{t\in\mathbb{C}: \text{Re}(t)<0\}$. The class of such functions has been studied extensively in \cite{AB1988, AA2012, A1996}. Note that here ``$\log$'' denotes the principal branch of the logarithm. It is known that $F$ is closely 
related with the theory of nonparametric minimal surfaces over domains of the form $-\infty < u < u_0(v)$, $u_0(v+2\pi)=u_0(v)$ (see \cite{N1989}).\\[2mm]
\indent If $f$ is a non-vanishing log-harmonic mapping in $\mathbb{D}$, then $f$ can be expressed as
\bea\label{c1} f(z) = h(z)\overline{g(z)},\eea
where $h$ and $g$ are non-vanishing analytic functions in the unit disk $\mathbb{D}.$ Further, if the mapping $f$ given by \eqref{c1} is locally univalent and sense-preserving, then $h'g \neq 0$ in $\mathbb{D}$ and the second complex dilatation $\omega$ is given by $\omega=g'h/gh'$ is a Schwarz function in $\mathbb{D}$. Furthermore, it is evident that
\beas J_f(z)=|h'(z)g(z)|^2-|g'(z)h(z)|^2=|h'(z)g(z)|^2\left(1-\left|\frac{h(z)g'(z)}{h'(z)g(z)}\right|^2\right)>0,~z\in\mathbb{D}.\eeas
Several authors have studied and established fundamental results on log-harmonic mappings defined on the unit disk $\mathbb{D}$ (see \cite{BHPV2022, AA2012, 1LP2018}).
\subsection{Pre-Schwarzian and Schwarzian derivatives of analytic functions}
For a locally univalent analytic function $f$ defined in a simply connected domain $\Omega\subset \mathbb{C}$, the pre-Schwarzian derivative $P_f$ and the Schwarzian derivative $S_f$ of $f$ are, respectively, defined as follows:
\bea\label{c2} P_f(z)=\frac{f''(z)}{f'(z)}\quad\text{and}\quad S_f(z) = P_f'(z)-\frac{1}{2}P_f^2(z)=\frac{f'''(z)}{f''(z)}-\frac{3}{2}\left(\frac{f''(z)}{f'(z)}\right)^2.\eea
Moreover, the pre-Schwarzian and the Schwarzian norms of $f$ are, respectively, given by
\bea\label{r1} \Vert P_f\Vert= \sup_{z \in \mathbb{D}}(1-|z|^2)|P_f(z)|\quad\text{and}\quad \Vert S_f\Vert = \sup_{z \in \mathbb{D}}(1-|z|^2)^2|f_h(z)|.\eea
Some important global univalence criteria for a locally univalent analytic function have been obtained using the pre-Schwarzian and Schwarzian norms.
For a univalent analytic function $f$ in $\mathbb{D}$, it is well-known that $\Vert P_f\Vert\leq 6$ and the equality 
is attained for the Koebe function or its rotation. 
One of the most used univalence criterion for locally univalent analytic functions is the Becker's univalence criterion \cite{B1972}, 
which states that if $f$ a locally univalent analytic function and $\sup_{z\in\mathbb{D}}\left(1-|z|^2\right) \left|zP_f(z)\right|\leq1$, then $f$ is univalent in $\mathbb{D}$. In a subsequent study, 
Becker and Pommerenke \cite{BP1984} prove that the constant $1$ is sharp.
In 1976, Yamashita \cite{Y1976} 
proved that $\Vert P_f \Vert<\infty $ is finite if, and only if, $f$ is uniformly locally univalent in $\mathbb{D}$. Moreover, if $\Vert P_f\Vert<2$, then $f$ is bounded in $\mathbb{D}$ (see \cite{KS2002}).\\[2mm] 
\indent In terms of the Schwarzian derivative, it is well-known that for any univalent analytic function $f$ in $\mathbb{D}$, we have the sharp inequality $\Vert S_f\Vert \leq 6$ and
the equality is attained for the Koebe function or its rotation (see \cite{K1932}). 
In 1949, Z. Nehari \cite{N1949} established important criteria for global univalence, expressed in terms of the Schwarzian derivative, by virtue of the connection with linear differential equations.
For instance, if $f$ is locally univalent and analytic in $\mathbb{D}$ and satisfies $\Vert S_f\Vert \leq2$, then $f$ is univalent in $\mathbb{D}$. The bound $2$ is sharp \cite{H1949}.
\subsection{Pre-Schwarzian and Schwarzian norms of harmonic mappings}
\indent For a locally univalent harmonic mapping $f=h+\overline{g}$ in the unit disk $\mathbb{D}$, Hern\'andez and Mart\'in \cite{HM2015} have defined the pre-Schwarzian and Schwarzian derivatives, respectively, as follows:
\beas
P_f &=& \left(\log(J_f)\right)_z = \frac{h''}{h'}-\frac{\overline{\omega}\omega'}{1-|\omega|^2}\quad \text{and}\hfill\\[2mm]
S_f &=& \left(\log J_f\right)_{zz}-\frac{1}{2}\left(\log J_f\right)_z^2= S_h+\frac{\overline{\omega}}{1-|\omega|^2}\left(\frac{h''}{h'}\omega'-\omega''\right)-\frac{3}{2}\left(\frac{\omega'\overline{\omega}}{1-|\omega|^2}\right)^2,\eeas
where $S_h$ is the classical Schwarzian derivative of the analytic function $h$, $J_f$ is the Jacobian and $\omega=g'/h'$ is the second complex dilatation of $f$. These derivatives play a crucial role in understanding the behavior of harmonic functions, particularly in the context of conformal mappings and their geometric properties. 
This notion of pre-Schwarzian and Schwarzian derivatives of harmonic functions is a generalization of the classical pre-Schwarzian and Schwarzian derivatives of analytic functions. 
Note that when $f$ is analytic, we have $\omega=0.$ It is also easy to see that $S_f = (P_f)_z-(1/2)(P_f)^2.$ As in the case of analytic functions, for a sense-preserving 
locally univalent harmonic mapping $f = h+\bar{g}$ in the unit disk $\mathbb{D}$, the pre-Schwarzian norm $\Vert P_f\Vert$ and the Schwarzian norm $\Vert S_f\Vert$ are defined by (\ref{r1}). For a comprehensive study on the pre-Schwarzian and Schwarzian derivatives for harmonic mappings, we refer to \cite{HM2015, 2LP2018}. 
\subsection{Pre-Schwarzian and Schwarzian derivatives of a non-vanishing log-harmonic mapping}
\indent For a locally univalent non-vanishing log-harmonic mapping $f=h\ol{g}$ in a simply connected domain $\Omega$, Bravo {\it et al.} \cite{BHPV2022} have defined the pre-Schwarzian $P_f$ and Schwarzian derivatives $S_f$, respectively, as follows:
\beas
P_f&=&\left(\log(J_f)\right)_z =\frac{h''}{h'}+\frac{g'}{g}-\frac{\overline{\omega}\omega'}{1-|\omega|^2}=P_\phi-\frac{\overline{\omega}\omega'}{1-|\omega|^2}\text{and}\\[2mm]
S_f&=&\left(\log J_f\right)_{zz}-\frac{1}{2}\left(\log J_f\right)_z^2=S_\phi+\frac{\overline{\omega}}{1-|\omega|^2}\left(\frac{\phi''}{\phi'}\omega'-\omega''\right)-\frac{3}{2}
\left(\frac{\overline{\omega}\omega'}{1-|\omega|^2}\right)^2,\eeas
where $J_f$ is the Jacobian of $f$, $\phi'=h' g$ and $\omega=g'h/(h' g)$ is the second complex dilatation of the function $f$. If $f$ is a sense-preserving log-harmonic mapping of the form \eqref{c1} and $\psi$ is a locally univalent analytic function for which the composition $f\circ\psi$ is well defined, then the function $f\circ\psi$ is again a sense-preserving log-harmonic mapping, and $P_{f\circ\psi}=(P_f\circ\psi)\phi'+P_\psi$ and $S_{f\circ\psi}=(S_f\circ\psi)\phi'+S_\psi$.
For more information about the properties of the pre-Schwarzian and Schwarzian derivatives of a sense-preserving log-harmonic mapping, we refer to \cite{BHPV2022, AP2024, MPW2013}.
\section{Properties of the pre-Schwarzian norm of a non-vanishing log-harmonic mapping}
\noindent In the following result, we establish a necessary and sufficient condition for a non-vanishing log-harmonic mapping having a finite pre-Schwarzian norm.
\begin{thm}\label{Th0}
Let $f=h\overline{g}$ be a non-vanishing sense-preserving and locally univalent log-harmonic mapping with the dilatation $\omega=g'h/h'g$ defined in $\mathbb{D}$. Then either, $\Vert P_f \Vert =\Vert P_{hg}\Vert =\infty$ or, both $\Vert P_f \Vert $ and $\Vert P_{hg}\Vert $ are finite. If $\Vert P_f \Vert $ is finite, then
\beas
\left|\Vert P_f \Vert -\Vert P_{hg}\Vert \right|\leq 1\quad\text{and the constant $1$ is sharp.}\eeas
\end{thm}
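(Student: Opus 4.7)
The plan is to derive a closed-form expression for $P_f-P_{hg}$, bound it via the Schwarz--Pick lemma, and then compare norms by the triangle inequality; sharpness will follow from an explicit one-parameter family. To obtain the identity, set $A=h'g$ and $B=hg'$, so that $\omega=B/A$ and $(hg)'=A+B$. The formula recalled in the introduction gives $P_f=A'/A-\overline{\omega}\omega'/(1-|\omega|^2)$, while $P_{hg}=(A+B)'/(A+B)$. Differentiation of $\omega=B/A$ yields $A'B-AB'=-A^2\omega'$, and $A/(A+B)=1/(1+\omega)$. Combining these with the algebraic identity $(1-|\omega|^2)+\overline{\omega}(1+\omega)=1+\overline{\omega}$ produces
\beas P_f-P_{hg}=-\frac{(1+\overline{\omega})\,\omega'}{(1+\omega)(1-|\omega|^2)}.\eeas
Since $|1+\overline{\omega}|=|1+\omega|$, this gives the clean identity $|P_f(z)-P_{hg}(z)|=|\omega'(z)|/(1-|\omega(z)|^2)$ on $\mathbb{D}$. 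The Schwarz--Pick lemma applied to $\omega\in\mathcal{B}$ then yields $(1-|z|^2)|P_f(z)-P_{hg}(z)|\leq 1$, whence the triangle inequality produces $\Vert P_f\Vert\leq\Vert P_{hg}\Vert+1$ and $\Vert P_{hg}\Vert\leq\Vert P_f\Vert+1$. Both the finite/infinite dichotomy and the bound $\bigl|\Vert P_f\Vert-\Vert P_{hg}\Vert\bigr|\leq 1$ follow at once.

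For sharpness I will take, for a sufficiently large constant $c>0$, the mapping $f=h\overline{g}$ with $h(z)=(1+z)^c$ and $g(z)=e^{cz}(1+z)^{-c}$ (principal branches). Then $h$ and $g$ are non-vanishing analytic functions on $\mathbb{D}$; the product $hg=e^{cz}$ gives $P_{hg}\equiv c$ and $\Vert P_{hg}\Vert=c$; and a brief computation shows $\omega(z)=z$, so $f$ is a valid non-vanishing sense-preserving locally univalent log-harmonic mapping. A direct calculation yields
\beas (1-|z|^2)P_f(z)=c(1-|z|^2)-\frac{1+\overline{z}}{1+z},\eeas
whose second summand has modulus $1$. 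Writing $r=|z|$ and using the tangent-line estimate $\mathrm{Re}\bigl((1+\overline{z})/(1+z)\bigr)\geq 1-2r^2$ on $|z|=r$, the supremum of the modulus of the right-hand side reduces to a one-variable quadratic optimization in $r^2$; for $c$ sufficiently large this maximum equals $c-1$ and is attained at $z=0$. Consequently $\Vert P_f\Vert=c-1$ and $\Vert P_{hg}\Vert-\Vert P_f\Vert=1$, so the constant $1$ is sharp.

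The heart of the argument is the algebraic cancellation in the first step: once the difference $P_f-P_{hg}$ is reduced to a purely Schwarz--Pick-controlled expression in $\omega$, the norm comparison is essentially immediate. The only other technical issue is the explicit optimization needed to verify that the one-parameter family above really attains the constant $1$; this is a routine but slightly involved real-variable computation using the fact that, for $|z|=r$, the argument of $1+z$ lies in $[-\arcsin r,\arcsin r]$.
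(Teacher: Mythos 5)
Your proof of the inequality is, in substance, exactly the paper's: you derive the same identity $P_{hg}-P_f=\frac{1+\overline{\omega}}{1+\omega}\cdot\frac{\omega'}{1-|\omega|^2}$ (the paper gets it by logarithmic differentiation of $(hg)'=h'g(1+\omega)$; your $A=h'g$, $B=hg'$ bookkeeping is the same computation), and then the Schwarz--Pick bound and the triangle inequality for the norms proceed identically. The only genuine divergence is the extremal example: the paper takes $h(z)=\exp(z/(1-z))$, $g(z)=\exp(-z/(1-z))/(1-z)$, so that $\omega(z)=-z$, $hg=1/(1-z)$, $\Vert P_{hg}\Vert=4$ and $\Vert P_f\Vert=5$, whereas you use the one-parameter family with $hg=e^{cz}$, $\omega(z)=z$, and locate the extremum of $(1-|z|^2)|P_f|$ at the origin. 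I checked your computation: $(1-|z|^2)P_f(z)=c(1-|z|^2)-(1+\overline{z})/(1+z)$ is correct, and with $t=1-|z|^2$ the bound $\bigl|(1-|z|^2)P_f\bigr|^2\leq (c^2-4c)t^2+2ct+1$ is convex in $t$ for $c\geq 4$ with maximum $(c-1)^2$ at $t=1$, so $\Vert P_f\Vert=c-1$ and the gap is exactly $1$. Both examples are valid; yours costs a short optimization, while the paper's only needs the supremum along the positive real axis (together with the already-proved inequality to cap $\Vert P_f\Vert$ at $5$).
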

\begin{proof}
Since $f=h\overline{g}$ is a non-vanishing sense-preserving and locally univalent log-harmonic mapping in $\mathbb{D}$, thus $h'g \neq 0$ in $\mathbb{D}$ and the second complex dilatation $\omega=g'h/gh'$ is a Schwarz function in $\mathbb{D}$. The pre-Schwarzian derivative of $f$ is given by
\bea\label{a2}
P_f(z)=\frac{h''(z)}{h'(z)}+\frac{g'(z)}{g(z)}-\frac{\overline{\omega(z)}\omega'(z)}{1-|\omega(z)|^2}.\eea
Let $f_1=h(z)g(z)$, $z\in\mathbb{D}$. Since $h$ and $g$ are non-vanishing analytic functions, therefore $f_1$ is a non-vanishing analytic function with 
\bea\label{b2} f'_1(z)=h'(z)g(z)+h(z)g'(z)=h'(z)g\left(1+\;\omega(z) \right),\eea
which shows that $f_1$ is locally univalent function in $\mathbb{D}$. Taking logarithmic derivative on both sides of (\ref{b2}) with respect to $z$, we obtain
\beas P_{f_1}(z)=\frac{f''_1(z)}{f'_1(z)}=\frac{h''(z)}{h'(z)}+\frac{g'(z)}{g(z)}+\frac{\omega'(z) }{1+\omega(z) }\eeas
Therefore, the difference between the pre-Schwarzian derivatives of $f_1$ and $f$ is given by
\bea\label{b4}
P_{hg}(z)-P_f(z)=\frac{\omega'(z) }{1+\omega(z) }+\frac{\overline{\omega(z)}\omega'(z)}{1-|\omega(z)|^2}=\frac{1+\ol{\omega(z)} }{1+\;\omega(z) }\cdot\frac{\omega'(z)}{1-|\omega(z)|^2}. \eea
In view of the Schwarz-Pick lemma, we have
\beas
(1-|z|^2)\left||P_{hg}(z)|-|P_f|\right|&\leq & (1-|z|^2)\left|P_{hg}(z)-P_f(z)\right| \\
&=&  (1-|z|^2)\left|\frac{1+\ol{\omega(z)}}{1+\omega(z) }\cdot\frac{\omega'(z)}{1-|\omega(z)|^2}\right|\\
&\leq &1,\eeas
which shows that $\Vert P_f\Vert$ is finite if, and only if, $\Vert P_{hg}\Vert$ is finite. Moreover, if $\Vert P_f\Vert<\infty$, then
\beas
\left|\Vert P_f\Vert-\Vert P_{hg}\Vert\right|\leq \sup_{z\in\mathbb{D}} (1-|z|^2)\left||P_f(z)|-|P_{hg}(z)|\right|\leq 1.\eeas
\indent To show that the constant $1$ is sharp, we consider the log-harmonic mapping $f(z)=h(z)\overline{g(z)}$ with $h(z)=\exp(z/(1-z))$ and $g(z)=\exp(-z/(1-z))/(1-z)$. It is evident that the dilatation $\omega(z)=-z$ and $h(z)g(z)=1/(1-z)$. 
Therefore, the pre-Schwarzian derivatives of $f$ and $hg$, respectively, are
\beas P_f(z)=\frac{3}{(1-z)}-\frac{\overline{z}}{1-|z|^2}\quad \text{and}\quad P_{hg}(z)=\frac{2}{1-z}.\eeas
Thus, the pre-Schwarzian norm of $f$ is
\beas
\Vert P_f\Vert=\sup_{z\in\mathbb{D}}(1-|z|^2)\left|P_f\right|=\sup_{z\in\mathbb{D}}(1-|z|^2)\left|\frac{3}{1-z}-\frac{\overline{z}}{1-|z|^2}\right|.
\eeas
On the positive real axis, we have
\beas \sup_{0\leq r<1} (1-r^2)\left|\frac{3}{1-r}-\frac{r}{1-r^2}\right|=5.\eeas
Similarly, we have $\Vert P_{hg}\Vert=4$. 
Hence, we have $\left|\Vert P_f \Vert -\Vert P_{hg}\Vert \right|=|5-4|=1$, which shows that the constant $1$ is sharp.
This completes the proof.
\end{proof}
\noindent Motivated by the results of Liu and Ponnusamy \cite{2LP2018}, we establish a relationship between the pre-Schwarzian norm of a non-vanishing log-harmonic mapping $f = h\overline{g}$ and that of the analytic function $hg^{\epsilon}$, where $\epsilon \in \overline{\mathbb{D}}$ and we choose the principal branch of the logarithm.
\begin{thm}\label{Th1}
Let $f=h\overline{g}$ be a non-vanishing sense-preserving and locally univalent log-harmonic mapping with the dilatation $\omega=g'h/h'g$ defined in $\mathbb{D}$. If $\log{g(z)}$ is an analytic Bloch function in $\mathbb{D}$, then either, $\Vert P_f \Vert =\Vert P_{hg^{\epsilon}}\Vert =\infty$ or, both $\Vert P_f \Vert $ and $\Vert P_{hg^{\epsilon}}\Vert $ are finite for each $\epsilon\in\ol{\mathbb{D}}$. If $\Vert P_f \Vert $ is finite, then
\beas
\left|\Vert P_f \Vert -\Vert P_{hg^{\epsilon}}\Vert \right|\leq 1+|1-\epsilon|\;\beta_{\log{g}}\leq 1+2\;\beta_{\log{g}},
\eeas
for each $\epsilon\in\ol{\mathbb{D}}$. 
In particular,
\beas
\left|\Vert P_f \Vert -\Vert P_{h}\Vert \right|\leq 1+\beta_{\log{g}}.\eeas
\end{thm}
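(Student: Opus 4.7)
The plan is to compute the analytic pre-Schwarzian $P_{hg^{\epsilon}}$ in closed form, subtract $P_{f}$, and then bound the two pieces of the difference separately, using Schwarz--Pick for the dilatation part and the Bloch hypothesis on $\log g$ for the remainder. Concretely, I would first exploit the identity $hg'=\omega h'g$ coming from the definition of the dilatation to rewrite
\[(hg^{\epsilon})'(z)=g^{\epsilon}(z)h'(z)\bigl(1+\epsilon\omega(z)\bigr),\]
which is non-vanishing in $\mathbb{D}$ since $|\epsilon\omega(z)|<1$ for $\epsilon\in\overline{\mathbb{D}}$, so that $hg^{\epsilon}$ is locally univalent. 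Taking its logarithmic derivative gives
\[P_{hg^{\epsilon}}(z)=\frac{h''(z)}{h'(z)}+\epsilon\,\frac{g'(z)}{g(z)}+\frac{\epsilon\,\omega'(z)}{1+\epsilon\omega(z)}.\]

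Subtracting the expression \eqref{a2} for $P_{f}$ yields a cleanly split difference
\[P_{hg^{\epsilon}}(z)-P_{f}(z)=(\epsilon-1)\frac{g'(z)}{g(z)}+\frac{\epsilon\,\omega'(z)}{1+\epsilon\omega(z)}+\frac{\overline{\omega(z)}\,\omega'(z)}{1-|\omega(z)|^{2}}.\]
The next step I would carry out is to combine the two $\omega'$-terms over a common denominator; a short algebraic manipulation produces
\[\frac{\epsilon\,\omega'}{1+\epsilon\omega}+\frac{\overline{\omega}\,\omega'}{1-|\omega|^{2}}=\frac{(\epsilon+\overline{\omega})\,\omega'}{(1+\epsilon\omega)(1-|\omega|^{2})},\]
which specialises to \eqref{b4} when $\epsilon=1$, thus recovering the situation of Theorem \ref{Th0}. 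The crucial identity I would then verify is
\[|1+\epsilon\omega|^{2}-|\epsilon+\overline{\omega}|^{2}=(1-|\epsilon|^{2})(1-|\omega|^{2})\geq 0\qquad(\epsilon\in\overline{\mathbb{D}}),\]
which forces $|\epsilon+\overline{\omega}|\leq|1+\epsilon\omega|$. Combined with the Schwarz--Pick estimate $(1-|z|^{2})|\omega'(z)|\leq 1-|\omega(z)|^{2}$, this bounds the $\omega'$-block uniformly by $1$.

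For the first summand the Bloch hypothesis on $\log g$ gives $(1-|z|^{2})|g'(z)/g(z)|\leq\beta_{\log g}$, and therefore $(1-|z|^{2})|(\epsilon-1)g'(z)/g(z)|\leq|1-\epsilon|\,\beta_{\log g}\leq 2\,\beta_{\log g}$. Adding the two bounds and invoking the reverse triangle inequality $||a|-|b||\leq|a-b|$ yields
\[(1-|z|^{2})\bigl||P_{hg^{\epsilon}}(z)|-|P_{f}(z)|\bigr|\leq 1+|1-\epsilon|\,\beta_{\log g},\]
from which the equivalent finiteness of $\Vert P_{f}\Vert$ and $\Vert P_{hg^{\epsilon}}\Vert$, as well as the stated norm comparison, follow by taking $\sup_{z\in\mathbb{D}}$ and using the elementary inequality $|\sup a-\sup b|\leq\sup|a-b|$ for nonnegative functions. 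The ``in particular'' case is simply $\epsilon=0$, giving $hg^{0}=h$ and $|1-\epsilon|=1$.

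I expect the main obstacle to be the algebraic identity $|1+\epsilon\omega|^{2}-|\epsilon+\overline{\omega}|^{2}=(1-|\epsilon|^{2})(1-|\omega|^{2})$; it is the linchpin that allows a Schwarz--Pick estimate to absorb the factor $|\epsilon+\overline{\omega}|/|1+\epsilon\omega|$ uniformly in $\epsilon\in\overline{\mathbb{D}}$. Once this identity is established, the remainder of the argument is a routine adaptation of the proof of Theorem \ref{Th0}, with the new ingredient being the Bloch bound on $\log g$ controlling the analytic factor $g^{\epsilon-1}$ hidden inside $hg^{\epsilon}$.
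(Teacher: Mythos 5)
Your proposal is correct and follows essentially the same route as the paper's proof: the same factorization $(hg^{\epsilon})'=h'g^{\epsilon}(1+\epsilon\omega)$, the same split of $P_{hg^{\epsilon}}-P_{f}$ into a $(\epsilon-1)g'/g$ term and a combined $\omega'$-block, and the same use of Schwarz--Pick plus the Bloch bound on $\log g$. The only difference is that you spell out the identity $|1+\epsilon\omega|^{2}-|\epsilon+\overline{\omega}|^{2}=(1-|\epsilon|^{2})(1-|\omega|^{2})$, which the paper uses implicitly when asserting $\sup_{z}\bigl|(\overline{\epsilon}+\omega)/(1+\epsilon\omega)\bigr|\leq 1$.
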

\begin{proof}
As $f=h\overline{g}$ is a non-vanishing sense-preserving and locally univalent log-harmonic mapping in $\mathbb{D}$, it follows that $h'g \neq 0$ in $\mathbb{D}$ and the second complex dilatation $\omega=g'h/gh'$ is a Schwarz function in $\mathbb{D}$. The pre-Schwarzian derivative of $f$ is given in (\ref{a2}).
Let $f_\epsilon(z)=h(z)g^{\epsilon}(z)$ defined in $\mathbb{D}$, where $\epsilon\in\ol{\mathbb{D}}$. Since $h$ and $g$ are non-vanishing analytic functions, therefore $f_\epsilon$ is a non-vanishing analytic function with 
\bea\label{a1} f'_\epsilon(z)=h'(z)g^{\epsilon}(z)+\epsilon h(z)g^{\epsilon-1}(z)g'(z)=h'(z)g^\epsilon(z)\left(1+\epsilon\;\omega(z) \right),\eea
which shows that $f_\epsilon$ is locally univalent function in $\mathbb{D}$. Taking logarithmic derivative on both sides of (\ref{a1}) with respect to $z$, we obtain
\beas \frac{f''_\epsilon(z)}{f'_\epsilon(z)}=\frac{h''(z)}{h'(z)}+\epsilon \frac{g'(z)}{g(z)}+\frac{\epsilon\;\omega'(z) }{1+\epsilon\;\omega(z) }\eeas
 Therefore, the pre-Schwarzian derivative $f_{\epsilon}$ is given by
\bea\label{a3}
P_{f_\epsilon}(z)=\frac{f''_\epsilon(z)}{f'_\epsilon(z)}=\frac{h''(z)}{h'(z)}+ \frac{g'(z)}{g(z)}+(\epsilon-1) \frac{g'(z)}{g(z)}+\frac{\epsilon\;\omega'(z) }{1+\epsilon\;\omega(z) }
\eea
From (\ref{a2}) and (\ref{a3}), we have
\bea\label{a4}
P_{hg^{\epsilon}}(z)-P_f(z)&=&(\epsilon-1) \frac{g'(z)}{g(z)}+\frac{\epsilon\;\omega'(z) }{1+\epsilon\;\omega(z) }+\frac{\overline{\omega(z)}\omega'(z)}{1-|\omega(z)|^2}\nonumber\\[2mm]
&=&(\epsilon-1) \frac{g'(z)}{g(z)}+\frac{\epsilon+\ol{\omega(z)} }{1+\epsilon\;\omega(z) }\cdot\frac{\omega'(z)}{1-|\omega(z)|^2}. \eea
In view of the Schwarz-Pick lemma, we have
\beas
(1-|z|^2)\left||P_{hg^{\epsilon}}(z)|-|P_f|\right|&\leq & (1-|z|^2)\left|P_{hg^{\epsilon}}(z)-P_f(z)\right| \\
&=&  (1-|z|^2)\left|(\epsilon-1) \frac{g'(z)}{g(z)}+\frac{\epsilon+\ol{\omega(z)}}{1+\epsilon\;\omega(z) }\cdot\frac{\omega'(z)}{1-|\omega(z)|^2}\right|\\
&\leq &|1-\epsilon|\;\sup_{z\in\mathbb{D}}(1-|z|^2)\left| \frac{g'(z)}{g(z)}\right|+\sup_{z\in\mathbb{D}}\left|\frac{\ol{\epsilon}+\omega(z) }{1+\epsilon\;\omega(z) }\right|\\[2mm]
&\leq&1+|1-\epsilon|\;\sup_{z\in\mathbb{D}}(1-|z|^2)\left| \frac{g'(z)}{g(z)}\right|.\eeas
If $\log{g(z)}$ is an analytic Bloch function in $\mathbb{D}$, then
\beas \beta_{\log{g}}:=\sup_{z\in\mathbb{D}}(1-|z|^2)\left|\frac{g'(z)}{g(z)}\right|<\infty.\eeas
Therefore, we have
\beas (1-|z|^2)\left||P_{h(z)g^{\epsilon}(z)}|-|P_f|\right|\leq  1+|1-\epsilon|\beta_{\log{g}}\leq 1+2\beta_{\log{g}}<\infty,\eeas
which shows that $\Vert P_f\Vert$ is finite if, and only if, $\Vert P_{h(z)g^{\epsilon}}\Vert$ is finite. Moreover, if $\Vert P_f\Vert<\infty$, then
\beas
\left|\Vert P_f\Vert-\Vert P_{hg^{\epsilon}}\Vert\right|\leq \sup_{z\in\mathbb{D}} (1-|z|^2)\Big||P_f(z)|-|P_{hg^{\epsilon}}(z)|\Big|\leq 1+ |1-\epsilon|\beta_{\log{g}}.
\eeas
This completes the proof.
\end{proof}
This relationship in \textrm{Theorem \ref{Th0}} not only enhances our understanding of the behavior of log-harmonic mappings but also provides insights into their geometric properties.
Now, if we consider a log-harmonic mapping $f=h\overline{g}$ in $\mathbb{D}$ such that $h$ is an analytic and locally univalent function, and $g$ is a non-vanishing analytic function in $\mathbb{D}$, then $h'g \neq 0$ in $\mathbb{D}$. This shows that $f$ is locally univalent in $\mathbb{D}$.
\begin{thm}\label{Th11}
Let $f=h\overline{g}$ be a sense-preserving log-harmonic mapping such that $h$ is analytic and locally univalent, and $g$ is non-vanishing analytic in $\mathbb{D}$ with the dilatation $\omega=g'h/h'g$ defined in $\mathbb{D}$. If $\log{g(z)}$ is an analytic Bloch function in $\mathbb{D}$, then either, $\Vert P_f \Vert =\Vert P_{hg^{\epsilon}}\Vert =\infty$ or, both $\Vert P_f \Vert $ and $\Vert P_{hg^{\epsilon}}\Vert $ are finite for each $\epsilon\in\ol{\mathbb{D}}$. If $\Vert P_f \Vert $ is finite, then
\beas
\left|\Vert P_f \Vert -\Vert P_{hg^{\epsilon}}\Vert \right|\leq 1+2\;\beta_{\log{g}},
\eeas
for each $\epsilon\in\ol{\mathbb{D}}$. 
In particular,
\beas
\left|\Vert P_f \Vert -\Vert P_{h}\Vert \right|\leq 1+\beta_{\log{g}}.\eeas
Both the constants $ 1+2\;\beta_{\log{g}}$ and $1+\beta_{\log{g}}$ are sharp.
\end{thm}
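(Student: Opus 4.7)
The plan is to adapt the proof of \textrm{Theorem \ref{Th1}} almost verbatim, then supplement with explicit sharpness examples for the two constants. The new hypothesis (that $h$ is analytic and locally univalent, not necessarily non-vanishing, while $g$ is non-vanishing analytic) is just as permissive for the structural computations: since $h$ is locally univalent we have $h' \neq 0$ everywhere, and since $g \neq 0$, the dilatation $\omega = g'h/(h'g)$ is analytic, with $|\omega|<1$ in $\mathbb{D}$ by the sense-preserving hypothesis. The identity $J_f = |h'g|^2(1-|\omega|^2)$ and the formula $P_f = (\log J_f)_z = h''/h' + g'/g - \overline{\omega}\omega'/(1-|\omega|^2)$ therefore remain valid; neither of them demands that $h$ itself be non-vanishing.

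With this in hand, I would define $f_\epsilon(z) = h(z)g^\epsilon(z)$ for $\epsilon \in \overline{\mathbb{D}}$, using the principal branch of $\log g$, which exists globally on $\mathbb{D}$ because $g$ is non-vanishing on this simply connected domain. The derivative is $f_\epsilon' = h'g^\epsilon(1+\epsilon\omega)$, which does not vanish (since $|\epsilon\omega|<1$), so $f_\epsilon$ is a locally univalent analytic function. The expression
\begin{equation*}
P_{hg^\epsilon}(z) - P_f(z) = (\epsilon-1)\frac{g'(z)}{g(z)} + \frac{\epsilon + \overline{\omega(z)}}{1+\epsilon\,\omega(z)}\cdot \frac{\omega'(z)}{1-|\omega(z)|^2}
\end{equation*}
is then identical to the one derived in \textrm{Theorem \ref{Th1}}. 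Applying the Schwarz--Pick lemma to the second term (together with the inequality $|\epsilon+\overline{\omega}|\le|1+\epsilon\omega|$ valid for $|\epsilon|\le 1$, $|\omega|<1$) and the Bloch hypothesis $\beta_{\log g}<\infty$ to the first term yields $(1-|z|^2)\big||P_f|-|P_{hg^\epsilon}|\big| \le 1 + |1-\epsilon|\,\beta_{\log g}$, from which the finiteness equivalence and the norm comparison follow formally. The particular case $\epsilon = 0$ gives the secondary bound $|\|P_f\|-\|P_h\|| \le 1 + \beta_{\log g}$.

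The new aspect is to verify that both constants $1+2\beta_{\log g}$ and $1+\beta_{\log g}$ are sharp. The natural strategy is to imitate the sharpness construction used in \textrm{Theorem \ref{Th0}}, but permitting $h$ to have a zero. A reasonable candidate is $h(z) = z\exp(z/(1-z))$ and $g(z) = \exp(-z/(1-z))/(1-z)$, which preserves the clean closed form $\omega(z)=-z$ of the earlier example, and for which $g'/g$, the dilatation term, and the resulting pre-Schwarzian expressions admit explicit evaluation on the positive real axis. For the bound $1+\beta_{\log g}$ one takes $\epsilon=0$, comparing $\|P_f\|$ with $\|P_h\|$ directly; for the bound $1+2\beta_{\log g}$ one takes $\epsilon=-1$ (so that $|1-\epsilon|=2$) and computes $\|P_{h/g}\|$. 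In each case the sharpness reduces to optimising a one-variable function on $[0,1)$.

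The step I expect to be the main obstacle is precisely this sharpness verification. The bound $1+|1-\epsilon|\beta_{\log g}$ arises from separately majorising two independent contributions, so achieving equality requires that the $(\epsilon-1)g'/g$ term and the Möbius-type $\omega$-term attain their respective maxima coherently (same ray, compatible phases) and that moreover the supremum of $|P_f|$ and $|P_{hg^\epsilon}|$ are realised in a way that their difference reproduces the full sum. Matching these three simultaneous extremality conditions, and then rigorously computing both norms in closed form to confirm the exact value of $|\|P_f\|-\|P_{hg^\epsilon}\||$, is where the bulk of the work will lie.
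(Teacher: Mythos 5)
Your treatment of the main inequality is sound and coincides with the paper's: the paper simply asserts that the bound follows by the same reasoning as Theorem \ref{Th1}, and your observations that $h'\neq 0$ follows from local univalence and that the formula for $P_f$ survives when $h$ itself vanishes are exactly what is needed. The gap is in the sharpness part, which is the only genuinely new content of this theorem relative to Theorem \ref{Th1}, and which you correctly flag as the main obstacle but then attack with a defective candidate. For $h(z)=z\exp(z/(1-z))$ and $g(z)=\exp(-z/(1-z))/(1-z)$ the dilatation is not $-z$: because of the extra factor $z$ one has $h'/h=1/z+1/(1-z)^2$, so $\omega=(g'/g)/(h'/h)=-z^2/(1-z+z^2)$. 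Much more seriously, $g'/g=-z/(1-z)^2$, hence $(1-|z|^2)|g'(z)/g(z)|=(1-|z|^2)|z|/|1-z|^2$, which is unbounded as $z\to 1^{-}$ along the real axis; thus $\log g$ is \emph{not} a Bloch function and $\beta_{\log g}=\infty$. Your example therefore violates the hypothesis of the theorem and cannot witness sharpness of a bound of the form $1+c\,\beta_{\log g}$. (This $g$ was usable in the sharpness example of Theorem \ref{Th0} only because the constant there does not involve $\beta_{\log g}$.)

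The paper's examples are entirely different. For the constant $1+2\beta_{\log g}$ it takes $h(z)=z/(1-z)$, $g(z)=1/(1-z)$ (so $\omega(z)=z$ and $\beta_{\log g}=2$) with $\epsilon=-1$, for which $hg^{-1}=z$, $\Vert P_{hg^{-1}}\Vert=0$ and $\Vert P_f\Vert=5=1+2\cdot 2$. For the constant $1+\beta_{\log g}$ it uses a one-parameter family: $h(z)=1/(1-z)$ with dilatation $\omega(z)=(a-z)/(1-az)$, $a\in(0,1)$, for which $\beta_{\log g}=2$, $\Vert P_h\Vert=4$, and $\Vert P_f\Vert\to 7$ as $a\to 1^{-}$, so the bound $1+\beta_{\log g}=3$ is approached only in the limit over the family. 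In particular, a single fixed example need not suffice for the second constant; to complete your proof you would need either to produce admissible examples (with $\log g$ genuinely Bloch) attaining the bounds, or to exhibit an extremal family as the paper does.
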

\begin{proof} Using analogous reasoning to that used to prove \textrm{Theorem} \ref{Th1}, we arrive at the desired conclusion.\\[2mm]
\indent To show that the constant $1+2\beta_{\log g}$ is sharp, we consider the log-harmonic mapping $f(z)=h(z)\overline{g(z)}$ where $h(z)=z/(1-z)$, $g(z)=1/(1-z)$. It is 
evident that $h(z)$ is an analytic and locally univalent function while $g(z)$ is non-vanishing analytic in $\mathbb{D}$. As $g'(z)h(z)/(g(z)h'(z))=\omega(z)$, thus, we have the 
dilatation $\omega(z)=z$. Let $\epsilon=-1$. The pre-Schwarzian derivatives of $f$ and $hg^{-1}$ are, respectively, given by 
\beas
P_f(z)=\frac{3}{1-z}-\frac{\overline{z}}{1-|z|^2}\quad\text{and}\quad P_{hg^{-1}}(z)=0.\eeas
It is evident that
\beas
\beta_{\log{g}}=\sup_{z\in\mathbb{D}}(1-|z|^2)\left|\frac{g'(z)}{g(z)}\right|=\sup_{z\in\mathbb{D}}(1-|z|^2)\left|\frac{1}{1-z}\right|=2,\eeas
which shows that $\log{g}$ is analytic Bloch function. Thus, the pre-Schwarzian norms of $f$ and $hg^{-1}$ are, respectively, given by 
\beas
\Vert P_f\Vert=5\quad\text{and}\quad \Vert P_{hg^{-1}}\Vert=0.\eeas
Therefore, $\left|\Vert P_f\Vert-\Vert P_{hg^{-1}}\Vert\right|=5=1+2\cdot2$.\\[2mm]
\indent To show that the constant $1+\beta_{\log g}$ is sharp, we consider the log-harmonic mapping $f(z)=h(z)\overline{g(z)}$ with $h(z)=1/(1-z)$ and dilatation $\omega(z)=(a-z)/(1-az)$, where $a\in(0,1)$. As $\omega=g'h/h'g$, we have 
\beas \frac{g'(z)}{g(z)}=\frac{a-z}{(1-az)(1-z)}.\eeas
It is evident that
\beas
\beta_{\log{g}}=\sup_{z\in\mathbb{D}}(1-|z|^2)\left|\frac{g'(z)}{g(z)}\right|=\sup_{z\in\mathbb{D}}(1-|z|^2)\left|\frac{a-z}{(1-az)(1-z)}\right|.\eeas
On the positive real axis, we have 
\beas \sup_{0\leq r<1}\frac{(a-r)(1+r)}{(1-ar)}=2.\eeas
Therefore, $\log{g}$ is analytic Bloch function and $\beta_{\log{g}}=2$. It is evident that 
\beas 1-|\omega(z)|^2&=&\frac{(1-a^2)(1-|z|^2)}{(1-az)(1-a\ol{z})}.\eeas
Therefore, the pre-Schwarzian derivative of $f$ is 
\beas P_f(z)&=&\frac{h''(z)}{h'(z)}+\frac{g'(z)}{g(z)}-\frac{\overline{\omega(z)}\omega'(z)}{1-|\omega(z)|^2}\\[2mm]
&=&\frac{2}{1-z}+\frac{a-z}{(1-az)(1-z)}+\frac{(a-\ol{z})}{(1-az)(1-|z|^2)}\eeas
Similarly, a direct computation shows that $P_h(z)=h''(z)/h'(z)=2/(1-z)$ and the pre-Schwarzian norm of $h$ is $\Vert P_h\Vert=4$. 
The pre-Schwarzian norm of $f$ is 
\beas\Vert P_f\Vert=\sup_{z\in\mathbb{D}}\;(1-|z|^2)\left|\frac{2}{1-z}+\frac{a-z}{(1-az)(1-z)}+\frac{a-\ol{z}}{(1-az)(1-|z|^2)}\right|.\eeas
On the positive real axis, we have 
\beas \sup_{0\leq r<1}\left|2(1+r)+\frac{(a-r)(1+r)}{1-a r}+\frac{a-r}{1-a r}\right|=\sup_{0\leq r<1}G(a, r),\eeas
where $G(a,r)=2(1+r)+(a-r)(2+r)/(1-a r)$. Differentiate $G(a, r)$ partially with respect to $r$, we obtain
\beas \frac{\partial }{\partial r}G(a, r)=\frac{\left(2 a+1\right)\left(a r^2-2 r+a\right)}{(1-a r)^2}.\eeas
The roots of the equation $a r^2-2 r+a=0$ are 
\beas r_1(a)=\frac{1-\sqrt{1-a^2}}{a}\quad\text{and}\quad r_2(a)=\frac{1+\sqrt{1-a^2}}{a}.\eeas
It is evident that $r_2(a)>1$ and $r_1(a)\in(0, 1)$ for $a\in(0, 1)$. Hence, we have  
\beas\Vert P_f\Vert=G(a, r_1(a))=\frac{4 a^3+\left(\sqrt{1-a^2}+2\right) a^2+(4a+2) \left(\sqrt{1-a^2}-1\right)}{a^2 \sqrt{1-a^2}},\eeas
which tends to $7$ as $a\to1^-$. Therefore, we see that 
\beas \left|\Vert P_f \Vert -\Vert P_{h}\Vert \right|=3= 1+\beta_{\log{g}}.\eeas
This completes the proof.
\end{proof}
In the following result, our objective is to ascertain a condition under which the analytic function $hg^\epsilon$ is univalent in $\mathbb{D}$, where $\epsilon\in\ol{\mathbb{D}}$.
\begin{thm}\label{Th4}
Let $f=h\overline{g}$ be a non-vanishing sense-preserving and locally univalent log-harmonic mapping with the dilatation $\omega=g'h/h'g$ defined in $\mathbb{D}$. If 
\bea\label{a5} |z P_{f}|+|1-\epsilon|\left|\frac{z g'(z)}{g(z)}\right|+\frac{|z \omega'(z)|}{1-|\omega(z)|^2}\leq \frac{1}{(1-|z|^2)}\eea
for $\epsilon\in\ol{\mathbb{D}}$, then $f_\epsilon=hg^\epsilon$ is univalent in $\mathbb{D}$.
\end{thm}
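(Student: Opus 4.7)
The plan is to apply Becker's univalence criterion (stated in the introduction) to the analytic function $f_\epsilon = hg^\epsilon$. Since the hypothesis forces $h'g(1+\epsilon\omega)$ to behave like $f_\epsilon'$ (as computed in equation (\ref{a1})), and since the non-vanishing of $g$ together with local univalence of $h$ and the Schwarz property of $\omega$ keep $f_\epsilon$ locally univalent, the only thing to verify is the Becker bound $(1-|z|^2)|zP_{f_\epsilon}(z)| \leq 1$.

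First I would invoke the identity (\ref{a4}) already derived in the proof of Theorem \ref{Th1}, namely
\beas
P_{hg^\epsilon}(z) - P_f(z) = (\epsilon-1)\frac{g'(z)}{g(z)} + \frac{\epsilon + \overline{\omega(z)}}{1+\epsilon\,\omega(z)}\cdot\frac{\omega'(z)}{1-|\omega(z)|^2}.
\eeas
Multiplying through by $z$ and applying the triangle inequality gives
\beas
|zP_{hg^\epsilon}(z)| \leq |zP_f(z)| + |1-\epsilon|\left|\frac{zg'(z)}{g(z)}\right| + \left|\frac{\epsilon + \overline{\omega(z)}}{1+\epsilon\,\omega(z)}\right|\cdot\frac{|z\omega'(z)|}{1-|\omega(z)|^2}.
\eeas

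The key auxiliary estimate is $\left|\frac{\epsilon + \overline{\omega(z)}}{1+\epsilon\,\omega(z)}\right| \leq 1$ whenever $\epsilon, \omega(z) \in \overline{\mathbb{D}}$. This follows from the identity
\beas
|1+\epsilon\,\omega(z)|^2 - |\epsilon+\overline{\omega(z)}|^2 = (1-|\epsilon|^2)(1-|\omega(z)|^2) \geq 0,
\eeas
which is immediate by expanding both sides. I would record this as a short lemma-style remark before the main estimate.

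Combining the two inequalities above, the right-hand side is bounded by
\beas
|zP_f(z)| + |1-\epsilon|\left|\frac{zg'(z)}{g(z)}\right| + \frac{|z\omega'(z)|}{1-|\omega(z)|^2},
\eeas
which by hypothesis (\ref{a5}) is at most $1/(1-|z|^2)$. Therefore
\beas
\sup_{z\in\mathbb{D}}(1-|z|^2)|zP_{f_\epsilon}(z)| \leq 1,
\eeas
and Becker's univalence criterion yields that $f_\epsilon = hg^\epsilon$ is univalent in $\mathbb{D}$, completing the proof. I do not expect any serious obstacle: the only non-routine ingredient is the Möbius-type inequality, and everything else is bookkeeping once (\ref{a4}) is in hand.
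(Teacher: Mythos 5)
Your proposal is correct and follows essentially the same route as the paper: both start from the identity (\ref{a4}), apply the triangle inequality, use the M\"obius-type bound $\bigl|(\epsilon+\overline{\omega})/(1+\epsilon\,\omega)\bigr|\leq 1$ for $\epsilon,\omega(z)\in\overline{\mathbb{D}}$, and conclude via Becker's univalence criterion. If anything, your version is slightly cleaner, since you keep the estimate pointwise before taking the supremum, whereas the paper bounds the supremum of the sum by the sum of three separate suprema, a step that does not literally follow from the pointwise hypothesis (\ref{a5}) even though the conclusion is the same.
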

\begin{proof}
Using similar argument as in the proof of \textrm{Theorem} \ref{Th1}, we obtain (\ref{a4}). Thus, we have
\bea\label{a6}
\left|zP_{hg^{\epsilon}}(z)\right|\leq\left|zP_f(z)\right|+|1-\epsilon|\left|\frac{zg'(z)}{g(z)}\right|+\left|\frac{\epsilon+\ol{\omega(z)} }{1+\epsilon\;\omega(z) }\cdot\frac{z\omega'(z)}{1-|\omega(z)|^2}\right|.\eea
Using (\ref{a5}) and (\ref{a6}), we have
\beas \sup_{z\in\mathbb{D}}(1-|z|^2)\left|z P_{hg^\epsilon}\right|&\leq&   \sup_{z\in\mathbb{D}}(1-|z|^2)|z P_{f}|+|1-\epsilon|\sup_{z\in\mathbb{D}}(1-|z|^2)\left|\frac{z g'(z)}{g(z)}\right|\\[2mm]
&&+\sup_{z\in\mathbb{D}}\left|\frac{\ol{\epsilon}+\omega(z) }{1+\epsilon\;\omega(z) }\right|\cdot\sup_{z\in\mathbb{D}}\frac{|z \omega'(z)|(1-|z|^2)}{1-|\omega(z)|^2}\\[2mm]
&\leq& \sup_{z\in\mathbb{D}}(1-|z|^2)|z P_{f}|+|1-\epsilon|\sup_{z\in\mathbb{D}}(1-|z|^2)\left|\frac{z g'(z)}{g(z)}\right|\\[2mm]
&&+\sup_{z\in\mathbb{D}}\frac{|z \omega'(z)|(1-|z|^2)}{1-|\omega(z)|^2}\\
&\leq&1.\eeas
In view of the Becker's univalence criterion, $hg^\epsilon$ is univalent in $\mathbb{D}$. This completes the proof.
\end{proof} 
The following result is an immediate consequence of \textrm{Theorem} \ref{Th4}.
\begin{cor}\label{Th5}
Let $f=h\overline{g}$ be a non-vanishing sense-preserving and locally univalent log-harmonic mapping with the dilatation $\omega=g'h/h'g$ defined in $\mathbb{D}$. If 
\beas |z P_{f}|+\frac{|z \omega'(z)|}{1-|\omega(z)|^2}\leq \frac{1}{(1-|z|^2)},\eeas
then $hg$ is univalent in $\mathbb{D}$ and $\Vert P_f\Vert\leq 7$.
\end{cor}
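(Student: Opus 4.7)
The plan is to deduce the corollary as a direct specialization of \textrm{Theorem} \ref{Th4}, combined with the classical univalent pre-Schwarzian bound and \textrm{Theorem} \ref{Th0}. The first conclusion, namely univalence of $hg$, should drop out by choosing the parameter $\epsilon$ so that the extra $|1-\epsilon|\,|zg'/g|$ term in the hypothesis \eqref{a5} disappears. The only non-trivial move is then upgrading univalence of $hg$ to the bound $\Vert P_f\Vert \leq 7$.

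Concretely, I would set $\epsilon = 1$ in \textrm{Theorem} \ref{Th4}. With this choice the middle summand $|1-\epsilon|\,|zg'(z)/g(z)|$ on the left of \eqref{a5} vanishes identically, and the remaining inequality is exactly the hypothesis assumed in the corollary. \textrm{Theorem} \ref{Th4} then applies (its conclusion is that $f_\epsilon = hg^{\epsilon}$ is univalent), and for $\epsilon = 1$ this says precisely that $f_1 = hg$ is univalent in $\mathbb{D}$.

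For the second conclusion, I would use the well-known sharp bound $\Vert P_F\Vert \leq 6$ for any univalent analytic function $F$ in $\mathbb{D}$; recall that the pre-Schwarzian $F''/F'$ is invariant under affine maps $F \mapsto aF + b$, so one can normalize $hg$ into the class $\mathcal{S}$ without changing its pre-Schwarzian norm. Consequently $\Vert P_{hg}\Vert \leq 6 < \infty$, and \textrm{Theorem} \ref{Th0} forces $\Vert P_f\Vert$ to be finite as well and to satisfy
\beas \Vert P_f\Vert \;\leq\; \Vert P_{hg}\Vert + 1 \;\leq\; 6 + 1 \;=\; 7.\eeas

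I do not anticipate any real obstacle: the argument is essentially a bookkeeping exercise that combines the two preceding results. The only point worth being careful about is noting that \textrm{Theorem} \ref{Th0} gives an absolute difference $|\,\Vert P_f\Vert - \Vert P_{hg}\Vert\,|\leq 1$ only once the finiteness of one of the two norms is known; that finiteness is supplied for free by the univalence of $hg$ together with the classical bound.
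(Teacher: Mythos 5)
Your proposal is correct and follows essentially the same route as the paper: the authors also obtain univalence of $hg$ by running the argument of Theorem \ref{Th4} with the $\epsilon=1$ specialization (where the $|1-\epsilon|\,|zg'/g|$ term drops out), then invoke $\Vert P_{hg}\Vert\leq 6$ and Theorem \ref{Th0} to get $\Vert P_f\Vert\leq 7$. Your added remark about normalizing $hg$ into $\mathcal{S}$ via an affine map before applying the classical bound is a detail the paper leaves implicit, but it does not change the argument.
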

\begin{proof}  Using analogous reasoning to that used to prove \textrm{Theorem} \ref{Th4}, we obtain $hg$ is univalent in $\mathbb{D}$.
Therefore, we have $\Vert P_{hg}\Vert \leq6$ and it follows from \textrm{Theorem \ref{Th0}}, we have $\Vert P_f\Vert \leq 7$. This completes the proof.
\end{proof}
Corollary \ref{Th5} provides a bound of the pre-Schwarzian norm $\Vert P_f\Vert$ of a non-vanishing sense-preserving and locally univalent log-harmonic mapping $f$ in $\mathbb{D}$.
For the purposes of this paper, the following definition is required.
\begin{defi} A univalent function $f\in C^1(\mathbb{D})$ (continuous first order partial derivatives in $\mathbb{D}$) with $f(0)=0$ is called starlike with respect to the origin if
\beas\frac{\partial }{\partial \theta}\left(f(r e^{i\theta })\right)=\text{Re}\left(\frac{z f_{z}-\ol{z} f_{\ol{z}}}{f}\right)>0\quad\text{for all $z=r e^{i\theta}\in\mathbb{D}\setminus\{0\}$.}\eeas
\end{defi}
\noindent We denote by $S^*_{Lh}$ (res., $S^*$) the set of all starlike log-harmonic (res., analytic) mappings with respect to the origin. This definition generalizes the standard notion of starlikeness for analytic functions to the log-harmonic setting and is equivalent to the condition that the curve $f(r e^{i\theta })$ is starlike for each $r\in(0, 1)$. For further details, we refer to \cite{AH1987}.\\[2mm]
\noindent Abdulhadi and Hengartner \cite{AH1987} have established the following result for mappings in $S^*_{Lh}$.
\begin{thmA}\cite[Theorem 2.1]{AH1987}
\begin{enumerate}
 \item[(a)] If $f(z)=z|z|^{2\beta}h(z)\ol{g(z)}\in S^*_{Lh}$, then $\phi(z)=zh(z)/g(z)\in S^*$.  
 \item[(b)] For any given $\phi\in S^*$ and $\omega\in \mathcal{B}(\mathbb{D})$, there are $h$ and $g$ in $\mathcal{H}(\mathbb{D})$ uniquely determined such that 
 \item[(i)] $0\not\in hg(\mathbb{D})$ with $h(0)=g(0)=1$,
 \item[(ii)] $\phi(z)=zh(z)/g(z)$,
 \item[(iii)]$f(z)=z|z|^{2\beta} h(z)\ol{g(z)}$ is a solution of (\ref{eq1}) in $S^*_{Lh}$, where $\beta=\ol{a(0)}(1+a(0))/(1-|a(0)|^2)$.
 \end{enumerate}
 \end{thmA}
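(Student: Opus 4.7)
My plan is to treat the two parts separately, since (a) reduces to a single logarithmic-derivative identity while (b) is a constructive argument solving a first-order ODE for $h$ and then recovering $g$ from (ii).

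For part (a), the strategy is to compute the starlikeness quantity $(zf_z-\ol{z}f_{\ol{z}})/f$ for $f(z)=z|z|^{2\beta}h(z)\ol{g(z)}$ directly. Writing $|z|^{2\beta}=z^\beta\ol{z}^\beta$ via the principal branch and taking Wirtinger derivatives termwise gives $f_z/f=(1+\beta)/z+h'/h$ and $f_{\ol{z}}/f=\beta/\ol{z}+\ol{g'/g}$, so after multiplying by $z$ and $\ol{z}$ the $\beta$-contributions cancel in the difference:
\beas
\frac{zf_z-\ol{z}f_{\ol{z}}}{f}=1+\frac{zh'(z)}{h(z)}-\ol{\frac{zg'(z)}{g(z)}}.
\eeas
Since $\textrm{Re}(\ol{w})=\textrm{Re}(w)$, the real part of this quantity equals $\textrm{Re}(z\phi'(z)/\phi(z))$ where $\phi(z)=zh(z)/g(z)$. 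The condition $f\in S^*_{Lh}$ therefore translates directly into the classical angle condition $\textrm{Re}(z\phi'/\phi)>0$ on $\mathbb{D}\setminus\{0\}$, and since $\phi(0)=0$ and $\phi'(0)=h(0)/g(0)=1$, the classical characterization of analytic starlike functions yields $\phi\in S^*$.

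For part (b), given $\phi\in S^*$ and a Schwarz function $\omega$, I first note that condition (ii) forces $g(z)=zh(z)/\phi(z)$, so with $\phi'(0)=1$ we obtain $g(0)=h(0)$; it therefore suffices to construct $h$ uniquely with $h(0)=1$. Rewriting the log-harmonic PDE $\ol{f_{\ol{z}}/f}=\omega\cdot f_z/f$ for $f=z|z|^{2\beta}h\ol{g}$ and multiplying by $z$ yields
\beas
\bar\beta+\frac{zg'(z)}{g(z)}=\omega(z)(1+\beta)+\omega(z)\frac{zh'(z)}{h(z)},
\eeas
and substituting the identity $zg'/g=1+zh'/h-z\phi'/\phi$ coming from $g=zh/\phi$ rearranges this into
\beas
(1-\omega(z))\,\frac{zh'(z)}{h(z)}=\omega(z)(1+\beta)-\bar\beta-1+\frac{z\phi'(z)}{\phi(z)}.
\eeas
With $\beta$ chosen by the prescribed formula so that $\bar\beta=\omega(0)(1+\beta)$, the right-hand side vanishes at $z=0$ (because $z\phi'/\phi\to 1$), and dividing by $z(1-\omega)$, which is non-vanishing on $\mathbb{D}$ since $|\omega|<1$, produces an analytic function; integration with $h(0)=1$ then determines $h$ uniquely. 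Then $g=zh/\phi$ is analytic with $g(0)=1$, $h$ is non-vanishing by its exponential representation, and $g$ is non-vanishing because $\phi$ is univalent on $\mathbb{D}\setminus\{0\}$, which gives (i).

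To close (iii), I would apply part (a) in the reverse direction: starlikeness of $\phi$ gives $\textrm{Re}(z\phi'/\phi)>0$, hence $\textrm{Re}((zf_z-\ol{z}f_{\ol{z}})/f)>0$ on $\mathbb{D}\setminus\{0\}$. Combined with $f(0)=0$, valid because $\textrm{Re}(\beta)>-1/2$ forces $|z|^{1+2\textrm{Re}(\beta)}\to 0$, and the sense-preserving property $J_f=|f_z|^2(1-|\omega|^2)>0$ automatic from $|\omega|<1$, the log-harmonic analogue of the classical starlikeness criterion promotes this angle condition to univalence of $f$, placing $f$ in $S^*_{Lh}$. The main obstacle I anticipate is the analyticity of the candidate $h'/h$ at the origin: one must verify that the numerator $\omega(z)(1+\beta)-\bar\beta-1+z\phi'(z)/\phi(z)$ vanishes to the correct order so that the division by $z$ is legitimate, and this is exactly where the specific formula for $\beta$ is indispensable. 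Secondary subtleties are the consistent use of the principal branch in $|z|^{2\beta}$ when $\beta$ is complex, and the promotion of the angle condition on $f$ to genuine univalence so that $f\in S^*_{Lh}$ in the strict sense of the paper's definition.
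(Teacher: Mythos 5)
This statement is quoted verbatim from Abdulhadi and Hengartner \cite[Theorem~2.1]{AH1987}; the paper under review gives no proof of it, so there is nothing internal to compare your argument against. On its own merits, your proposal reconstructs what is essentially the original argument and the main computations are correct: the identity $(zf_z-\ol{z}f_{\ol{z}})/f=1+zh'/h-\ol{zg'/g}$, the reduction of the PDE to $(1-\omega)\,zh'/h=\omega(1+\beta)-\ol{\beta}-1+z\phi'/\phi$, and the verification that the choice of $\beta$ makes $\ol{\beta}=\omega(0)(1+\beta)$ so that the right-hand side vanishes at the origin, are all accurate, and the construction of $h$ (hence $g=zh/\phi$) is sound.

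Two points are asserted rather than proved and deserve attention. First, in (iii) you write that $J_f=|f_z|^2(1-|\omega|^2)>0$ is ``automatic from $|\omega|<1$''; it is not, since one must also rule out $f_z=0$. This does follow, but only by using the angle condition: the PDE gives $\ol{z}f_{\ol z}/f=\ol{\omega}\,\ol{zf_z/f}$, so if $zf_z/f$ vanished at some $z_0\neq 0$ then $\mathrm{Re}\bigl((zf_z-\ol{z}f_{\ol z})/f\bigr)$ would vanish there, contradicting positivity. Second, and more substantially, the passage from $\mathrm{Re}\bigl((zf_z-\ol{z}f_{\ol z})/f\bigr)>0$ to global univalence of $f$ is the actual content of the theorem and cannot be dispatched by invoking a ``log-harmonic analogue of the classical criterion'': one must argue that $\arg f(re^{i\theta})$ is strictly increasing in $\theta$ with total increment exactly $2\pi$ (the factor $z$ contributes winding number one, $|z|^{2\beta}$, $h$ and $\ol{g}$ contribute zero since $0\notin hg(\mathbb{D})$), so each circle maps injectively onto a starlike Jordan curve, and then a degree-theoretic or argument-principle step shows each value inside is attained exactly once. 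Your proof is a correct skeleton of the standard argument, but this winding-number step is where the work lies and should be written out if the proof is to stand alone.
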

Now, we provide an example of univalent log-harmonic mapping in $\mathbb{D}$ that vanishes at the origin.
\begin{ex}\label{Ex1} Let $h(z)=1/(1-z)$, $g(z)=1-z$, $\omega(z)=(2-3z)/(3-2z)$ and $\phi(z)=z/(1-z)^2$. It is evident that $\phi$ is the well-known Koebe function which is univalent and analytic in $\mathbb{D}$. Now,
\beas &&\text{Re}\left(\frac{z h'(z)}{h(z)}\right)=\text{Re}\left(\frac{z}{1-z}\right)>-\frac{1}{2},\\[2mm]
&&\text{Re}\left(\frac{z \phi'(z)}{\phi(z)}\right)=\text{Re}\left(\frac{1+z}{1-z}\right)=1+2\;\text{Re}\left(\frac{z}{1-z}\right)>0,\eeas
which shows that $\phi\in S^*$. Furthermore, it is easy to show that $\omega(z)$ is analytic in $\mathbb{D}$ with $|\omega(z)|<1$ for $z\in\mathbb{D}$, as illustrated in {\bf Figure} \ref{Fig1}. In view of \textrm{Theorem A}, we have $\beta =2$ and 
\bea\label{eq2} f(z)=\frac{z^3}{(1-z)}\ol{z^2(1-z)},\eea
which is a starlike log-harmonic mapping in $\mathbb{D}$ and hence, $f(z)$ is univalent in $\mathbb{D}$. Note that, the log-harmonic mapping $f(z)$ given by (\ref{eq2}) is univalent but has vanishes multiply at the origin. 
\begin{figure}[H]
\centering
\includegraphics[scale=0.8]{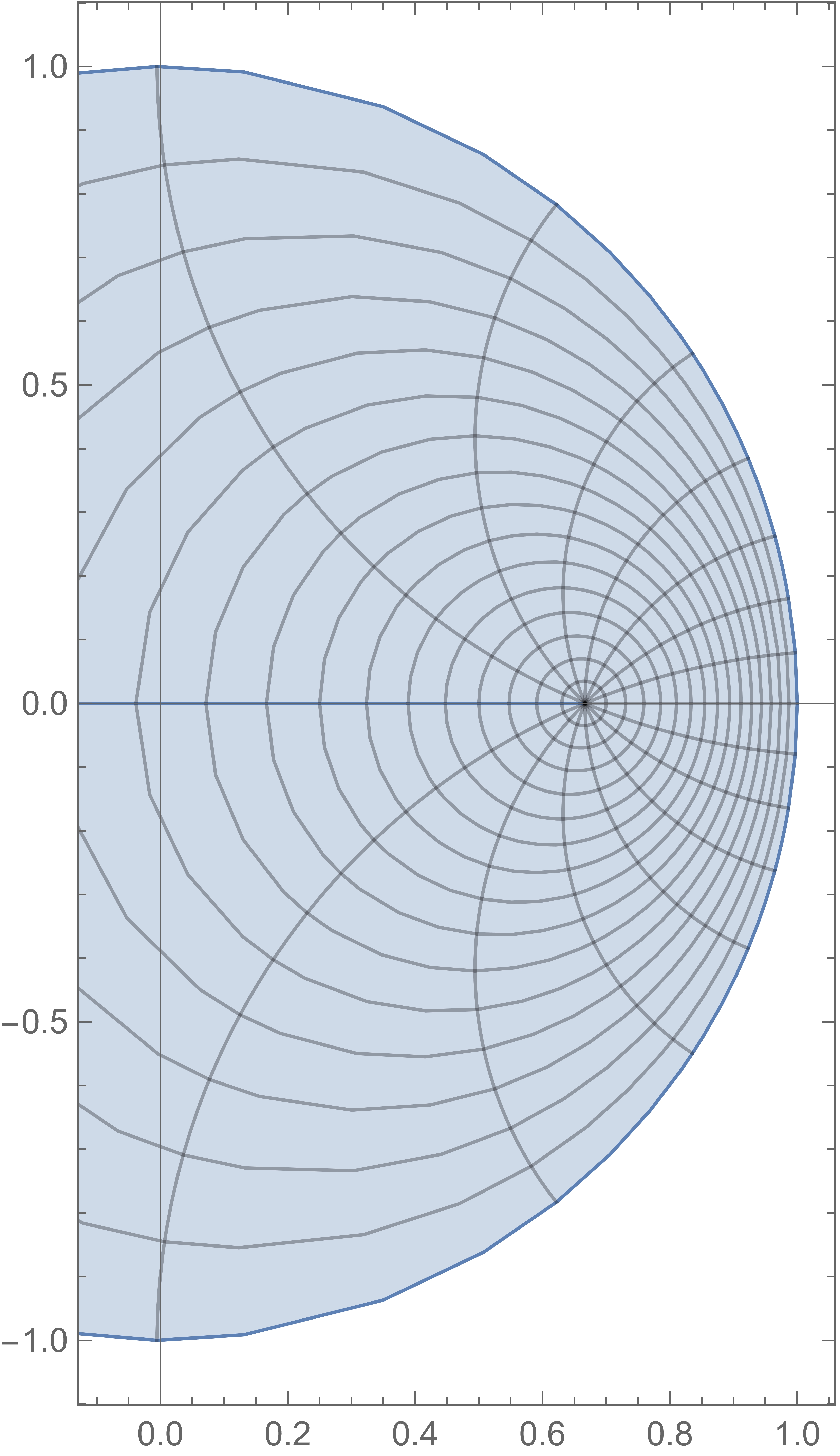}
\caption{The image of $\mathbb{D}$ under the mapping $\omega(z)=(2-3z)/(3-2z)$}
\label{Fig1}
\end{figure}
\end{ex}
\section{Derivation of the formula of pre-Schwarzian and Schwarzian derivatives of any log-harmonic mappings}
\noindent{\bf Motivation:} It is evident that $f(z)=z/|1-z|^2$ is a log-harmonic mapping in $\mathbb{D}$ that vanishes at the origin. Now, we can express $f(z)$ as $f(z)=zh(z)\ol{g(z)}$, where $h(z)=g(z)=1/(1-z)$ are analytic in $\mathbb{D}$ with $g(0)=1=h(0)$ and the second complex dilatation is 
\beas \omega(z)=\frac{z g'(z)/g(z)}{1+z h'(z)/h(z)}=z,\eeas
which is a Schwarz function in $\mathbb{D}$. The Jacobian of the log-harmonic mapping $f(z)=z/|1-z|^2$ in $\mathbb{D}$ is defined by 
\beas J_f=|f_z|^2-|f_{\ol{z}}|^2=\left|(zh'+h)g\right|^2\left(1-|\omega|^2\right).\eeas
Now, it is evident that $(zh'+h)g=1/(1-z)^3\not=0$ in $\mathbb{D}$ and it follows that $ J_f>0$ in $\mathbb{D}$. Thus, $f(z)=z/|1-z|^2$ is a locally univalent and 
sense-preserving log-harmonic mapping in $\mathbb{D}$ vanishes at the origin. 
Some other examples are univalent log-harmonic Koebe function 
\[f(z)=\frac{z|1-z|^2}{(1-z)^2}\exp\left(\frac{2z}{1-z}\right)\ol{\exp\left(\frac{2z}{1-z}\right)}\] and univalent log-harmonic right half-plane mapping 
\[f(z)=\frac{z}{1-z}\exp\left(\frac{z}{1-z}\right)\ol{\exp\left(\frac{z}{1-z}\right)}.\] 
For more other examples, we refer to \cite{LP2022}. Note that Bravo {\it et al.} \cite{BHPV2022} have defined the pre-Schwarzian and Schwarzian derivatives 
only for a locally univalent non-vanishing log-harmonic mapping $f=h\ol{g}$ in a simply connected domain. 
The log-harmonic mapping (\ref{eq2}) in \textrm{Example \ref{Ex1}} and other locally univalent or univalent log-harmonic mappings that vanish at the origin motivate us to find the 
pre-Schwarzian and Schwarzian derivatives of any locally univalent log-harmonic mappings in $\mathbb{D}$.\\[2mm]
\indent Let $f(z)=z^m|z|^{2\beta m} h(z)\ol{g(z)}$ be a locally univalent and sense-preserving log-harmonic mapping in $\mathbb{D}$, where 
$m$ is a non-negative integer, $\text{Re}(\beta)>-1/2$, and $h$ and $g$ are analytic in $\mathbb{D}$ such that $g(0)=1$ and $h(0)\not=0$. As $f$ is locally univalent and 
sense-preserving, thus, we have $|f_z|\not=0$. Note that $z^a=\exp(a\log(z))$, where the branch of the logarithm is determined by $log(1)=0$, {\it i.e.}, $1^a=1$. This ensure that the function is
 single-valued and analytic. Thus, the function $z^a$ is differentiable.
The Jacobian of the locally univalent and sense-preserving log-harmonic mapping $f=z^m|z|^{2\beta m} h(z)\ol{g(z)}$ in $\mathbb{D}$ is defined by 
\beas J_f&=&\left|\left(z^{(\beta+1)m}h'+(\beta+1)m\; z^{(\beta+1)m-1}h\right)\ol{z^{\beta m} g}\right|^2\\[2mm]
&&-\left|z^{(\beta+1)m}h \ol{\left(z^{\beta m}g'+\beta m z^{\beta m-1}g\right)}\right|^2\\[2mm]
&=&\left|(zh'+(\beta+1)m h)z^{(2\beta+1)m-1}g\right|^2-\left|z^{(2\beta+1)m-1}h\left(z g'+\beta m g\right)\right|^2\\[2mm]
&=&\left|(zh'+(\beta+1)m h)z^{(2\beta+1)m-1}g\right|^2\left(1-|\omega|^2\right),\eeas
where its second complex dilatation is given by 
\beas \omega(z)=\frac{z^{(2\beta+1)m-1}h\left(z g'+\beta m g\right)}{(zh'+(\beta+1)m h)z^{(2\beta+1)m-1} g}=\frac{z g'(z)/g(z)+\beta m}{(\beta+1)m+z h'(z)/h(z)}.\eeas
When $f$ is analytic, {\it i.e.,} $g\equiv 1$, $\beta=0$ and $z^m h=f$, then $J_f=|f'|^2$. In this case, the classical formulas of pre-Schwarzian and Schwarzian derivatives (\ref{c2}) of $f$ can be written as
\beas P_f=\frac{\partial}{\partial z}\log(J_f)\quad\text{and}\quad S_f=\frac{\partial^2}{\partial z^2}\log(J_f)-\frac{1}{2}\left(\frac{\partial}{\partial z}\log(J_f)\right)^2.\eeas
 This provides us the pre-Schwarzian derivative of $f=z^m |z|^{2\beta m} h(z)\ol{g(z)}$ as
 \beas P_f&=&\frac{\partial}{\partial z}\log\left(|(zh'+(\beta+1)m h)z^{(2\beta+1)m-1}g|^2\left(1-|\omega|^2\right)\right)\\[2mm]
&=&\frac{z^{(2\beta+1)m-1}g'+((2\beta+1)m-1) z^{(2\beta+1)m-2}g}{z^{(2\beta+1)m-1}g}+\frac{zh''+((\beta+1)m+1) h'}{zh'+(\beta+1)m h}\\[2mm]
&& +\frac{1}{1-|\omega|^2}\left(-\omega'\ol{\omega}\right).\eeas
It is evident that $P_f$ can also have the following form:
 \bea\label{c3} P_f&=&\frac{G'}{G}+\frac{H'}{H}-\frac{\ol{\omega}\omega'}{1-|\omega|^2},~G=z^{(2\beta+1)m-1}g,~ H=zh'+(\beta+1)m h.\eea
Now, the Schwarzian derivative of $f=z^m |z|^{2\beta m} h(z)\ol{g(z)}$ is given by
 \beas S_f&=&\frac{\partial^2}{\partial z^2}\log(J_f)-(1/2)\left(\frac{\partial}{\partial z}\log(J_f)\right)^2\\
 &=&\left(\frac{G'}{G}\right)'+\left(\frac{H'}{H}\right)'-\ol{\omega}\left(\frac{\omega'}{1-|\omega|^2}\right)'-\frac{1}{2}\left(\frac{G'}{G}+\frac{H'}{H}-\frac{\ol{\omega}\omega'}{1-|\omega|^2}\right)^2\\[2mm]
 &=&\frac{GG''-(G')^2}{G^2}+\frac{HH''-(H')^2}{H^2}-\ol{\omega}\frac{\omega''\left(1-|\omega|^2\right)+\ol{\omega}\left(\omega'\right)^2}{\left(1-|\omega|^2\right)^2}-\frac{1}{2}\left(\frac{G'}{G}\right)^2\\[2mm]
 &&-\frac{1}{2}\left(\frac{H'}{H}\right)^2-\frac{1}{2}\left(\frac{\ol{\omega}\omega'}{1-|\omega|^2}\right)^2-\frac{G'H'}{GH}+\frac{G'}{G}\frac{\ol{\omega}\omega'}{1-|\omega|^2}+\frac{H'}{H}\frac{\ol{\omega}\omega'}{1-|\omega|^2}\eeas
\beas &=&\left(\frac{G''}{G}-\frac{3}{2}\left(\frac{G'}{G}\right)^2\right)+\left(\frac{H''}{H}-\frac{3}{2}\left(\frac{H'}{H}\right)^2\right)-\frac{G'H'}{GH}-\frac{3}{2}\left(\frac{\ol{\omega}\omega'}{1-|\omega|^2}\right)^2\\[2mm]
 &&+\frac{\ol{\omega}}{1-|\omega|^2}\left(\left(\frac{H'}{H}+\frac{G'}{G}\right)\omega'-\omega''\right).\eeas
As $|f_z|\not=0$ in $\mathbb{D}$, it follows that $HG=(zh'+(\beta+1)mh)z^{(2\beta+1)m-1}g\not=0$ in $\mathbb{D}$. Thus, the analytic function defined by 
 \beas \phi(z)=\int_{z_0}^z (t h'(t)+(\beta+1)m h(t))t^{(2\beta+1)m-1}g(t) dt, \quad z_0\in\mathbb{D},\eeas
is locally univalent in $\mathbb{D}$ such that $\phi'(z)=(zh'+(\beta+1)m h)z^{(2\beta+1)m-1}g=HG$.
Therefore, the pre-Schwarzian and the Schwarzian derivatives of $\phi$ are, respectively, given below:
\beas P_\phi&=&\frac{(HG)'}{HG}=\frac{H'}{H}+\frac{G'}{G}\quad\text{and}\\[2mm]
S_\phi&=&\frac{HH''-(H')^2}{H^2}+\frac{GG''-(G')^2}{G^2}-\frac{1}{2}\left(\left(\frac{H'}{H}\right)^2+\left(\frac{G'}{G}\right)^2+2\frac{HG'}{HG}\right)\\[2mm]
&=&\left(\frac{G''}{G}-\frac{3}{2}\left(\frac{G'}{G}\right)^2\right)+\left(\frac{H''}{H}-\frac{3}{2}\left(\frac{H'}{H}\right)^2\right)-\frac{HG'}{HG}.\eeas
Thus, the pre-Schwarzian and Schwarzian derivatives of $f=z^m |z|^{2\beta m} h(z)\ol{g(z)}$ can be expressed as follows:
\be\label{c4}P_f=P_\phi-\frac{\ol{\omega}\omega'}{1-|\omega|^2}\quad\text{and}~ S_f=S_\phi-\frac{3}{2}\left(\frac{\ol{\omega}\omega'}{1-|\omega|^2}\right)^2+\frac{\ol{\omega}}{1-|\omega|^2}\left(\omega' P_\phi-\omega''\right).\ee
\begin{rem} Note that if we put $m=0$ in (\ref{c4}), we easily obtain the pre-Schwarzian and Schwarzian derivative of the non-vanishing log-harmonic mapping, which has been established in \cite{BHPV2022}.\end{rem}
Whenever we use the representation $f(z)=z^m|z|^{2\beta m} h(z)\ol{g(z)}$ for a locally univalent and sense-preserving log-harmonic mapping in $\mathbb{D}$, we mean that $m$ is a non-negative integer, $\text{Re}(\beta)>-1/2$, and $h$ and $g$ are analytic functions in $\mathbb{D}$ such that $g(0)=1$, $h(0)\not=0$ and $1^{2\beta m}=1$.
\subsection{Properties of the pre-Schwarzian and Schwarzian derivatives}
The following theorem characterizes the log-harmonic mapping with the analytic pre-Schwarzian derivative.
\begin{thm}\label{Theo2}
Let $f(z)=z^m|z|^{2\beta m} h(z)\ol{g(z)}$ be a locally univalent and sense-preserving log-harmonic mapping in $\mathbb{D}$. Then the pre-Schwarzian derivative $P_f$ of $f$ is analytic if, 
and only if, the second complex dilatation $\omega$ of $f$ is constant, {\it i.e.}, if, and only if, $f(z)=z^{(\beta+1) m} h(z)\;\ol{\gamma\left(z^{(\beta+1) m} h(z)\right)^a}$, where 
$a\in\mathbb{D}$ and $\gamma\in\mathbb{C}$.
\end{thm}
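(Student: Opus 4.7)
The plan is to exploit the decomposition \eqref{c4}, namely $P_f = P_\phi - \overline{\omega}\omega'/(1-|\omega|^2)$, where $\phi$ is the locally univalent analytic function with $\phi' = HG$ introduced just before \eqref{c4}. Since $\phi$ is analytic and $\phi'$ is nowhere zero, $P_\phi = \phi''/\phi'$ is holomorphic in $\mathbb{D}$. Consequently $P_f$ is analytic in $\mathbb{D}$ if and only if the non-analytic correction term $T := \overline{\omega}\omega'/(1-|\omega|^2)$ is itself analytic, i.e.\ if and only if $\partial T/\partial \bar z \equiv 0$ in $\mathbb{D}$.

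The next step is a direct $\partial_{\bar z}$ computation. Since $\omega$ and $\omega'$ are holomorphic, one has $\partial_{\bar z}\overline{\omega} = \overline{\omega'}$ and $\partial_{\bar z}(1-|\omega|^2) = -\omega\,\overline{\omega'}$, so that the quotient rule gives
\begin{equation*}
\frac{\partial}{\partial \bar z}\!\left(\frac{\overline{\omega}\,\omega'}{1-|\omega|^2}\right)
= \frac{\overline{\omega'}\omega'(1-|\omega|^2) + \overline{\omega}\omega'\cdot \omega\overline{\omega'}}{(1-|\omega|^2)^2}
= \frac{|\omega'|^2}{(1-|\omega|^2)^2}.
\end{equation*}
Since $|\omega|<1$ in $\mathbb{D}$, this expression vanishes identically if and only if $\omega'\equiv 0$, i.e.\ if and only if $\omega$ is a constant $a\in\mathbb{D}$. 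This establishes the first equivalence. The converse direction (constant $\omega$ implies $P_f$ analytic) is immediate from \eqref{c4}, since then the correction term vanishes and $P_f=P_\phi$.

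To derive the stated explicit form under the hypothesis $\omega\equiv a\in\mathbb{D}$, I would use the formula for the dilatation recorded just before \eqref{c3},
\begin{equation*}
\omega(z)=\frac{zg'(z)/g(z)+\beta m}{(\beta+1)m+zh'(z)/h(z)}.
\end{equation*}
Setting this equal to $a$ and rearranging yields
\begin{equation*}
\beta m + \frac{zg'(z)}{g(z)} \;=\; a(\beta+1)m + a\,\frac{zh'(z)}{h(z)}
\;=\; z\,\frac{d}{dz}\log\!\Big(\big(z^{(\beta+1)m}h(z)\big)^{a}\Big).
\end{equation*}
The left-hand side equals $z\,\frac{d}{dz}\log\!\big(z^{\beta m}g(z)\big)$, so the two holomorphic functions $z^{\beta m}g(z)$ and $\big(z^{(\beta+1)m}h(z)\big)^{a}$ have identical logarithmic derivatives. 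Hence they differ by a multiplicative constant $\gamma\in\mathbb{C}$, giving $z^{\beta m}g(z)=\gamma\big(z^{(\beta+1)m}h(z)\big)^{a}$. Taking complex conjugates and multiplying by $z^{(\beta+1)m}h(z)$ reproduces the target representation
\begin{equation*}
f(z)=z^{(\beta+1)m}h(z)\,\overline{\gamma\big(z^{(\beta+1)m}h(z)\big)^{a}}.
\end{equation*}

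\textbf{Anticipated obstacles.} The analytic computation of $\partial_{\bar z}T$ is routine. The one place where care is required is the identification step in the last paragraph: the expression $\big(z^{(\beta+1)m}h\big)^{a}$ and $z^{\beta m}$ involve complex exponents, so I will need to invoke the branch convention fixed in the paper ($1^{a}=1$, principal logarithm, with $h(0)\neq 0$) to guarantee that these powers are well-defined single-valued holomorphic functions on $\mathbb{D}$ and that the integration of logarithmic derivatives produces an equality rather than an equality modulo $2\pi i\mathbb{Z}$. The multiplicative constant $\gamma$ is absorbed into the conjugated factor, and the constraint $|a|<1$ follows automatically from $\omega(\mathbb{D})\subset\mathbb{D}$.
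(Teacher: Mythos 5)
Your proposal is correct and follows essentially the same route as the paper: both reduce analyticity of $P_f$ to the vanishing of $\partial_{\bar z}\bigl(\overline{\omega}\omega'/(1-|\omega|^2)\bigr)=|\omega'|^2/(1-|\omega|^2)^2$ (forcing $\omega'\equiv 0$), and both then integrate the logarithmic-derivative identity obtained from $\omega\equiv a$ to get $z^{\beta m}g=\gamma\bigl(z^{(\beta+1)m}h\bigr)^a$ and hence the stated form of $f$. The only cosmetic difference is that you phrase the analytic part via $P_\phi$ from \eqref{c4} rather than $G'/G+H'/H$ from \eqref{c3}, which are the same function.
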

\begin{proof} The pre-Schwarzian derivative of $f$ is given in (\ref{c3}). It is evident that 
\beas \frac{\partial }{\partial \ol{z}}P_f=- \frac{\partial }{\partial \ol{z}}\left(\frac{\ol{\omega}\omega'}{1-|\omega|^2}\right)=\frac{-|\omega'|^2}{(1-|\omega|^2)^2}.\eeas
As $P_f$ of is analytic in $\mathbb{D}$, thus, we have $|\omega'(z)|\equiv 0$ in $\mathbb{D}$, {\it i.e.,} $\omega(z)\equiv \text{constant}$ in $\mathbb{D}$. Conversely, if $\omega(z)$ is constant in $\mathbb{D}$, then the pre-Schwarzian derivative of $f$ is given by 
\beas P_f&=&\frac{G'}{G}+\frac{H'}{H},~\text{where}~G=z^{(2\beta+1)m-1}g,~ H=zh'+(\beta+1)m h,\eeas
which is analytic in $\mathbb{D}$. Let $\omega\equiv a\in\mathbb{D}$. Then, we have  
\beas \frac{z g'(z)/g(z)+\beta m}{(\beta+1)m+z h'(z)/h(z)}=a,\quad{\it i.e.,}\quad \frac{g'(z)}{g(z)}+\frac{\beta m}{z}=a\left(\frac{(\beta+1)m}{z}+\frac{h'(z)}{h(z)}\right).\eeas
Thus, we have
\beas z^{\beta m}g(z)=\gamma\left(z^{(\beta+1)m} h(z)\right)^a,~\text{where}~\gamma\in\mathbb{C}.\eeas
Therefore, $f$ can be expressed as
\beas f(z)=z^{(\beta+1) m} h(z)\;\ol{\gamma\left(z^{(\beta+1) m} h(z)\right)^a},~\text{where}~ a\in\mathbb{D}~\text{and}~\gamma\in\mathbb{C}.\eeas
This completes the proof.
\end{proof}
The following theorem characterizes the log-harmonic mapping with the analytic Schwarzian derivative.
\begin{thm}\label{Theo3}
Let $f(z)=z^m|z|^{2\beta m} h(z)\ol{g(z)}$ be a locally univalent and sense-preserving log-harmonic mapping in $\mathbb{D}$. Then the Schwarzian derivative $S_f$ of $f$ is analytic if, 
and only if, the second complex dilatation $\omega$ of $f$ is constant, {\it i.e.}, if, and only if, $f(z)=z^{(\beta+1) m} h(z)\;\ol{\gamma\left(z^{(\beta+1) m} h(z)\right)^a}$, where 
$a\in\mathbb{D}$ and $\gamma\in\mathbb{C}$. 
\end{thm}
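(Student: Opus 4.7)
The plan is to mimic the strategy of \textrm{Theorem}~\ref{Theo2} in the Schwarzian setting by computing $\partial S_f/\partial\ol{z}$ directly from the expression \eqref{c4} and showing that its vanishing forces $\omega'\equiv 0$.

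The ``if'' direction is immediate: when $\omega$ is constant, $\omega'\equiv\omega''\equiv 0$, so every correction term on the right of \eqref{c4} vanishes and $S_f = S_\phi$, which is analytic. For the ``only if'' direction I would use the basic rules $\partial_{\ol{z}}\ol{\omega}=\ol{\omega'}$ and $\partial_{\ol{z}}(1-|\omega|^2)=-\omega\ol{\omega'}$, together with the analyticity of $S_\phi$, $P_\phi$, $\omega'$ and $\omega''$. A quotient-rule computation applied to the two correction terms in \eqref{c4} should produce
\[
\frac{\partial S_f}{\partial\ol{z}} = -\frac{3\,\ol{\omega}\,\omega'\,|\omega'|^2}{(1-|\omega|^2)^3} + \frac{\ol{\omega'}\,(\omega' P_\phi-\omega'')}{(1-|\omega|^2)^2},
\]
so analyticity of $S_f$ is equivalent to the identity
\[
\ol{\omega'}\,(\omega' P_\phi-\omega'')(1-|\omega|^2) = 3\,\ol{\omega}\,\omega'\,|\omega'|^2 \quad \text{in }\mathbb{D}.
\]

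Suppose for contradiction that $\omega$ is not constant. Then by the identity theorem the zero set of $\omega'$ is discrete, so there is an open set $U\subset\mathbb{D}$ on which $\omega'$ never vanishes; shrinking $U$ if necessary I may also assume $\omega\neq 0$ on $U$ (else $\omega\equiv 0$, already constant). Cancelling the nonzero factor $\ol{\omega'}$ on $U$ in the displayed identity gives
\[
(\omega' P_\phi-\omega'')(1-|\omega|^2) = 3\,\ol{\omega}\,(\omega')^2.
\]
Applying $\partial/\partial\ol{z}$ once more, the analytic factor on the left is killed and I obtain $-(\omega' P_\phi-\omega'')\,\omega\,\ol{\omega'} = 3\,\ol{\omega'}\,(\omega')^2$ on $U$, i.e. $(\omega' P_\phi-\omega'')\,\omega = -3(\omega')^2$. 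Substituting back into the previous display yields
\[
-\frac{3(\omega')^2}{\omega}\,(1-|\omega|^2) = 3\,\ol{\omega}\,(\omega')^2,
\]
and cancelling $3(\omega')^2$ (nonzero on $U$) leads to $-(1-|\omega|^2)=|\omega|^2$, i.e. $-1=0$, a contradiction.

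Hence $\omega$ is constant in $\mathbb{D}$, and the explicit representation $f(z)=z^{(\beta+1)m}h(z)\,\ol{\gamma\bigl(z^{(\beta+1)m}h(z)\bigr)^a}$ is then obtained by the same integration of $g'/g = a\,h'/h + (a-1)(\beta m)/z\cdot(\ldots)$ used at the end of the proof of \textrm{Theorem}~\ref{Theo2}. The main obstacle I anticipate is the two-step elimination: a single differentiation of the first identity does not separate the analytic and anti-analytic content (unlike in the pre-Schwarzian case where a single $\partial_{\ol{z}}$ already gives $|\omega'|^2/(1-|\omega|^2)^2$), so one must differentiate, cancel $\ol{\omega'}$, differentiate again, and substitute — an argument valid only on the open set where both $\omega$ and $\omega'$ are nonzero, which is why the preliminary reduction to that open set must be made carefully.
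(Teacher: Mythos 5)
Your proposal is correct and follows essentially the same route as the paper: both proofs pass to a small disk where $\omega'\neq 0$ and apply $\partial/\partial\ol{z}$ repeatedly to strip away the anti-analytic factors until $\omega'\equiv 0$ is forced, a contradiction. The only real difference is cosmetic — you differentiate the explicit formula \eqref{c4} for $S_f$ directly (your expression for $\partial S_f/\partial\ol{z}$ agrees with the one the paper itself computes in the proof of Theorem~\ref{Theo5}), whereas the paper organizes the identity $S_f=S_\phi+\cdots$ as a quadratic in $\ol{\omega}$ with analytic coefficients and peels off the coefficients one at a time; your endgame, reaching $-1=0$ after arranging $\omega\neq 0$ on the subdisk, is sound and in fact slightly cleaner.
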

\begin{proof} 
Let $f(z)=z^m|z|^{2\beta m} h(z)\ol{g(z)}$ be a locally univalent and sense-preserving log-harmonic mapping in $\mathbb{D}$ that has an analytic Schwarzian derivative $S_f$ defined by 
\beas &&S_f=S_\phi-\frac{3}{2}\left(\frac{\ol{\omega}\omega'}{1-|\omega|^2}\right)^2+\frac{\ol{\omega}}{1-|\omega|^2}\left(\omega' \frac{\phi''}{\phi'}-\omega''\right), \eeas
\bea\text{\it i.e.,}&&(S_f- S_\phi)\left(1-|\omega|^2\right)^2+ \frac{3}{2}(\ol{\omega})^2(\omega')^2-\left(1-|\omega|^2\right)\ol{\omega}\left(\omega' \frac{\phi''}{\phi'}-\omega''\right)=0,\nonumber\\[1mm]\text{\it i.e.,}
&&\quad(S_f- S_\phi)+\ol{\omega}\left(\omega''-\omega' \frac{\phi''}{\phi'}-2(S_f- S_\phi)\omega\right)\nonumber\\[1mm]
\label{eq3}&&+(\ol{\omega})^2\left( \frac{3}{2}(\omega')^2+(S_f- S_\phi)\omega^2+\omega\left(\omega' \frac{\phi''}{\phi'}-\omega''\right)\right)=0,\eea
where $\phi(z)$ is an analytic and locally univalent function in $\mathbb{D}$ such that $\phi'(z)=(zh'+(\beta+1)m h)z^{(2\beta+1)m-1}g$. Suppose that $\omega$ is not constant in $\mathbb{D}$. Then there exists a disk $D(z_0, r):=\{z\in\mathbb{C}: |z-z_0|<r\}\subset \mathbb{D}$ such that $\omega'\not=0$ in $D(z_0, r)$, otherwise in view the Uniqueness theorem, we have $\omega$ is constant in $\mathbb{D}$. Differentiate (\ref{eq3}) with respect to $\ol{z}$, we obtain
\be\label{eq4}
\omega''-\omega' \frac{\phi''}{\phi'}-2(S_f- S_\phi)\omega+2\ol{\omega}\left( \frac{3}{2}(\omega')^2+(S_f- S_\phi)\omega^2+\omega\left(\omega' \frac{\phi''}{\phi'}-\omega''\right)\right)=0\ee
in $D(z_0,r)$. Differentiate (\ref{eq4}) with respect to $\ol{z}$, we obtain
\bea &&2\ol{\omega'}\left( \frac{3}{2}(\omega')^2+(S_f- S_\phi)\omega^2+\omega\left(\omega' \frac{\phi''}{\phi'}-\omega''\right)\right)=0,\nonumber\\[1mm]\text{\it i.e.,}
\label{eq5}&& \frac{3}{2}(\omega')^2+(S_f- S_\phi)\omega^2+\omega\left(\omega' \frac{\phi''}{\phi'}-\omega''\right)=0~\text{in}~D(z_0,r).\eea 
From (\ref{eq4}) and (\ref{eq3}), we have $\omega''-\omega' \phi''/\phi'-2(S_f- S_\phi)\omega=0$ and $S_f-S_\phi=0$ in $D(z_0, r)$, respectively. Thus, from (\ref{eq5}), we have $\omega'\equiv 0$ in $D(z_0, r)$, which contradicts the fact that $\omega$ is non-constant in $\mathbb{D}$. Hence, $\omega\equiv a\in\mathbb{D}$ and the
rest of the calculation follows from \textrm{Theorem \ref{Theo2}}. This completes the proof.
\end{proof}
The following theorem characterizes the log-harmonic mapping with the harmonic pre-Schwarzian derivative.
\begin{thm}\label{Theo4}
Let $f(z)=z^m|z|^{2\beta m} h(z)\ol{g(z)}$ be a locally univalent and sense-preserving log-harmonic mapping in $\mathbb{D}$. Then the pre-Schwarzian derivative $P_f$ of $f$ is harmonic if, 
and only if, the second complex dilatation $\omega$ of $f$ is constant.
\end{thm}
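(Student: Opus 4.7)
My plan is to follow the same scheme as in Theorem~\ref{Theo2}, replacing the analyticity condition $\pa_{\ol z}P_f=0$ by the harmonicity condition $\Delta P_f=0$, and then exploit the explicit representation (\ref{c4}) to reduce the problem to a statement about the dilatation $\omega$. Since (\ref{c4}) writes $P_f = P_\phi - \ol{\omega}\omega'/(1-|\omega|^2)$ with $P_\phi$ analytic (hence harmonic), it is enough to establish that the non-analytic term $Q := \ol{\omega}\omega'/(1-|\omega|^2)$ is harmonic on $\mathbb{D}$ if and only if $\omega$ is constant.

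The first step, carried out exactly as in the proof of Theorem~\ref{Theo2}, is to compute $\pa_{\ol z}Q = |\omega'|^2/(1-|\omega|^2)^2$ using analyticity of $\omega$. Consequently $\Delta P_f = -4\,\pa_z\bigl(|\omega'|^2/(1-|\omega|^2)^2\bigr)$, so $P_f$ is harmonic if and only if the real-valued function $u := |\omega'|^2/(1-|\omega|^2)^2$ satisfies $\pa_z u = 0$. Because $u$ is real, the identity $\pa_z u = \tfrac{1}{2}(u_x - i u_y) = 0$ forces $u_x \equiv u_y \equiv 0$, and the connectedness of $\mathbb{D}$ then gives $u \equiv c$ for some non-negative constant $c$.

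The main obstacle is to conclude that $c$ must equal $0$. I plan to argue by contradiction: suppose $c>0$. Since $\omega$ is a Schwarz function, $1-|\omega|^2>0$ on $\mathbb{D}$, and so $|\omega'|^2 = c(1-|\omega|^2)^2$ is strictly positive, meaning $\omega'$ is nowhere zero; in particular $\log|\omega'|$ is harmonic on $\mathbb{D}$. Taking logarithms in the identity $|\omega'|^2 = c(1-|\omega|^2)^2$ and then applying the Laplacian yields $\Delta \log(1-|\omega|^2) = 0$. On the other hand, a direct calculation entirely parallel to the one used above to evaluate $\pa_{\ol z}Q$ gives the standard formula
\[
\Delta \log(1-|\omega|^2) \;=\; -\,\frac{4|\omega'|^2}{(1-|\omega|^2)^2} \;=\; -4c \;<\; 0,
\]
a contradiction. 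Hence $c=0$, which forces $\omega'\equiv 0$ and therefore $\omega$ is constant.

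The converse is immediate: if $\omega$ is constant then $Q\equiv 0$, and $P_f = P_\phi$ is analytic, hence harmonic. This completes the proposed proof of the equivalence.
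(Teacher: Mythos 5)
Your argument is correct, and its decisive step differs genuinely from the paper's. Both proofs begin with the same computation $\pa_{\ol z}P_f=-|\omega'|^2/(1-|\omega|^2)^2$. The paper then assumes $\omega$ is non-constant, localizes to a disk where $\omega'\neq 0$, and repeatedly differentiates the resulting identity $(1-|\omega|^2)\omega''+2(\omega')^2\ol{\omega}\equiv 0$ with respect to $\ol z$ to strip off the coefficients of the powers of $\ol{\omega}$, concluding first $\omega''\equiv 0$ and then $\omega'\equiv 0$. You instead observe that $u:=|\omega'|^2/(1-|\omega|^2)^2$ is real-valued, so the harmonicity condition $\pa_z u=0$ forces $u\equiv c\geq 0$ on the connected domain $\mathbb{D}$, and you exclude $c>0$ by comparing the harmonicity of $\log|\omega'|$ (legitimate there, since $\omega'$ is then zero-free) with the identity $\Delta\log(1-|\omega|^2)=-4|\omega'|^2/(1-|\omega|^2)^2=-4c<0$. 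All the ingredients check out: the formula for $\pa_{\ol z}P_f$, the fact that a real function with vanishing $\pa_z$-derivative is locally constant, and the curvature-type identity for $\log(1-|\omega|^2)$. Your route is arguably more conceptual: it shows that harmonicity of $P_f$ forces the hyperbolic derivative $|\omega'|/(1-|\omega|^2)$ to be constant and then invokes a curvature obstruction to rule out a nonzero value. The paper's coefficient-stripping argument is more elementary and computational, and it is the same template the authors reuse for Theorems \ref{Theo3} and \ref{Theo5}. Either method yields the theorem; the converse direction is the same trivial observation in both.
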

\begin{proof}
Let $f(z)=z^m|z|^{2\beta m} h(z)\ol{g(z)}$ be a locally univalent and sense-preserving log-harmonic mapping in $\mathbb{D}$ that has harmonic pre-Schwarzian derivative $P_f$ defined in (\ref{c3}). It is evident that 
\beas\frac{\partial^2 }{\partial z\partial \ol{z}}P_f=\frac{\partial }{\partial z}\left(\frac{-|\omega'|^2}{(1-|\omega|^2)^2}\right)
&=&\frac{-\ol{\omega'}\left((1-|\omega|^2)\omega''+2(\omega')^2\ol{\omega}\right)}{(1-|\omega|^2)^3}.\eeas 
As $P_f$ is harmonic, we have $\ol{\omega'}\left((1-|\omega|^2)\omega''+2(\omega')^2\ol{\omega}\right)\equiv 0$ in $\mathbb{D}$.
Suppose that $\omega$ is not constant in $\mathbb{D}$. Then, there exists a disk $D(z_0, r):=\{z\in\mathbb{C}: |z-z_0|<r\}\subset \mathbb{D}$ such that $\omega'\not=0$ in $D(z_0, r)$. Thus, we have 
\bea\label{eq6} (1-|\omega|^2)\omega''+2(\omega')^2\ol{\omega}\equiv 0,~\text{\it i.e.,}~\omega''+\left(2(\omega')^2-\omega\omega''\right)\ol{\omega}\equiv 0 ~\text{in}~D(z_0,r).\eea
Differentiate (\ref{eq6}) with respect to $\ol{z}$, we obtain
\bea\label{eq7}\left(2(\omega')^2-\omega\omega''\right)\ol{\omega'}\equiv 0,~\text{\it i.e.,}~2(\omega')^2-\omega\omega''\equiv 0 ~\text{in}~D(z_0,r).\eea
From (\ref{eq6}) and (\ref{eq7}), we have $\omega''\equiv 0$ in $D(z_0,r)$ and hence, from (\ref{eq7}), we have $\omega'\equiv 0$ in $D(z_0, r)$. This contradicts the fact that $\omega$ 
is not constant in $\mathbb{D}$. Thus, $\omega\equiv$ constant in $\mathbb{D}$. Conversely, if $\omega\equiv a\in\mathbb{D}$, then the pre-Schwarzian derivative of $f$ is given by $P_f=G'/G+H'/H$, which is analytic and hence, harmonic in $\mathbb{D}$, where $G=z^{(2\beta+1)m-1}g$, $H=zh'+(\beta+1)m h$. 
\end{proof}
The following theorem characterizes the log-harmonic mapping with the harmonic Schwarzian derivative.
\begin{thm}\label{Theo5}
Let $f(z)=z^m|z|^{2\beta m} h(z)\ol{g(z)}$ be a locally univalent and sense-preserving log-harmonic mapping in $\mathbb{D}$. Then the Schwarzian derivative $S_f$ of $f$ is harmonic if, 
and only if, the second complex dilatation $\omega$ of $f$ is constant.
\end{thm}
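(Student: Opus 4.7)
The plan is to adapt the ``polynomial in $\ol{\omega}$'' strategy used in the proofs of Theorems \ref{Theo3} and \ref{Theo4}; harmonicity of $S_f$ is the mixed second-order condition $\partial_z\partial_{\ol z} S_f \equiv 0$. The sufficient direction is immediate: if $\omega \equiv a \in \mathbb{D}$, then $\omega' \equiv 0$, and the formula (\ref{c4}) collapses to $S_f = S_\phi$, which is analytic in $z$ and therefore harmonic.

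For the necessity, I would write $S_f = S_\phi + T$ with
\[
T = \frac{\ol{\omega}}{1-|\omega|^2}\bigl(P_\phi \omega' - \omega''\bigr) - \frac{3}{2}\left(\frac{\ol{\omega}\,\omega'}{1-|\omega|^2}\right)^{\!2}.
\]
Since $S_\phi$ is analytic, $S_f$ is harmonic iff $\partial_z\partial_{\ol z} T\equiv 0$. Using $\partial_{\ol z}\bigl[\ol{\omega}/(1-|\omega|^2)\bigr] = \ol{\omega'}/(1-|\omega|^2)^2$, a direct computation yields
\[
\partial_{\ol z} T = \frac{\ol{\omega'}\,K}{(1-|\omega|^2)^3}, \qquad K := (P_\phi \omega' - \omega'')(1-|\omega|^2) - 3\ol{\omega}\,(\omega')^2.
\]
Applying $\partial_z$ (and using $\partial_z \ol{\omega'}=0$ together with $\partial_z(1-|\omega|^2) = -\ol{\omega}\,\omega'$), the harmonicity condition reduces to
\[
\ol{\omega'}\bigl[(1-|\omega|^2)\,\partial_z K + 3\ol{\omega}\,\omega'\, K\bigr]\equiv 0.
\]
Assume, toward a contradiction, that $\omega$ is non-constant; then by the Uniqueness theorem $\omega'\neq 0$ on some disk $D=D(z_0,r)\subset\mathbb{D}$, so on $D$ the bracket must vanish. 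Expanding and collecting powers of $\ol{\omega}$ rewrites this bracket as the degree-two polynomial identity
\[
M + (A - 2\omega M)\,\ol{\omega} + (\omega^2 M - \omega A - 9B)\,\ol{\omega}^{\,2} = 0 \qquad (\star)
\]
with analytic-in-$z$ coefficients
\[
M = P_\phi'\,\omega' + P_\phi\, \omega'' - \omega''',\qquad A = 2P_\phi(\omega')^2 - 8\omega'\omega'',\qquad B = (\omega')^3.
\]

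I would then differentiate $(\star)$ twice with respect to $\ol z$, cancelling the nonzero factor $\ol{\omega'}$ at each step: the first $\ol z$-derivative gives $(A-2\omega M)+2\ol{\omega}(\omega^2 M-\omega A-9B)=0$, and the second forces $\omega^2 M-\omega A-9B=0$. Back-substituting into the intermediate equation yields $A=2\omega M$, and feeding both identities back into $(\star)$ leaves $M=0$; consequently $A=0$ and $B=(\omega')^3=0$ on $D$, contradicting $\omega'\neq 0$ on $D$. Therefore $\omega$ must be constant on $\mathbb{D}$. The main obstacle I anticipate is purely the computational bookkeeping: producing $\partial_z K$ without sign errors and grouping its terms so that the coefficient triple $(M,A,B)$ in $(\star)$ emerges in the clean form above. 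Once $(\star)$ is in place, the successive $\ol z$-differentiation step is entirely parallel to the arguments used in the proofs of Theorems \ref{Theo3} and \ref{Theo4}.
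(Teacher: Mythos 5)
Your proposal is correct and follows essentially the same route as the paper's proof: compute $\partial_z\partial_{\ol z}S_f$ (equivalently of $T=S_f-S_\phi$), factor out $\ol{\omega'}$ on a disk where $\omega'\neq 0$, organize the vanishing bracket as a degree-two polynomial in $\ol{\omega}$ with analytic coefficients, and differentiate twice in $\ol z$ to force all coefficients --- and hence $(\omega')^3$ --- to vanish, yielding the contradiction. Your coefficients $M$, $A$, $B$ match the paper's expressions in \eqref{eq9}--\eqref{eq13} exactly, so the only difference is notational bookkeeping via the intermediate quantity $K$.
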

\begin{proof} 
Let $f(z)=z^m|z|^{2\beta m} h(z)\ol{g(z)}$ be a locally univalent and sense-preserving log-harmonic mapping in $\mathbb{D}$ that has harmonic Schwarzian derivative $S_f$ defined in (\ref{c4}). It is evident that 
\beas\frac{\partial^2 }{\partial z\partial \ol{z}}S_f&=&\frac{\partial }{\partial z}\left(-\frac{3(\omega')^2\ol{\omega}\ol{\omega'}}{(1-|\omega|^2)^3}+\frac{\left(\omega' P_\phi-\omega''\right)\ol{\omega' }}{(1-|\omega|^2)^2}\right)\\[1mm]
&=&\frac{-6\left(1-|\omega|^2\right)\omega'\omega''\ol{\omega}\ol{\omega'}-9(\omega')^3(\ol{\omega})^2\ol{\omega'}}{(1-|\omega|^2)^4}\\[2mm]
&&+\frac{(1-|\omega|^2)^2\left(\omega' P_\phi-\omega''\right)'\ol{\omega' }+2(1-|\omega|^2)\ol{\omega' }\omega'\ol{\omega}\left(\omega' P_\phi-\omega''\right)}{(1-|\omega|^2)^4},\eeas
where $\phi(z)$ is an analytic and locally univalent function in $\mathbb{D}$ such that $\phi'(z)=(zh'+(\beta+1)m h)z^{(2\beta+1)m-1}g$.
As $S_f$ is harmonic, we have 
\bea&& -6\left(1-|\omega|^2\right)\omega'\omega''\ol{\omega}\ol{\omega'}-9(\omega')^3(\ol{\omega})^2\ol{\omega'}+(1-|\omega|^2)^2\left(\omega' P_\phi-\omega''\right)'\ol{\omega' }\nonumber\\[2mm]
&&+2(1-|\omega|^2)\ol{\omega' }\omega'\ol{\omega}\left(\omega' P_\phi-\omega''\right)\equiv 0,\nonumber\\[1mm]\text{\it i.e.,}
&&2\left(\omega'\left(\omega' P_\phi-\omega''\right)-3\omega'\omega''\right)\left(1-|\omega|^2\right)\ol{\omega}\ol{\omega'}-9(\omega')^3(\ol{\omega})^2\ol{\omega'}\nonumber\\[2mm]
\label{eq8}&&+\left(\omega' P_\phi-\omega''\right)'(1-|\omega|^2)^2\ol{\omega' }\equiv 0\quad\text{in}\quad\mathbb{D}.\eea
Suppose that $\omega$ is not constant in $\mathbb{D}$. Then there exists a disk $D(z_0, r):=\{z\in\mathbb{C}: |z-z_0|<r\}\subset \mathbb{D}$ such that $\omega'\not=0$ in $D(z_0, r)$. From (\ref{eq8}), we have 
\bea &&2\left(\omega'\left(\omega' P_\phi-\omega''\right)-3\omega'\omega''\right)\left(1-|\omega|^2\right)\ol{\omega}-9(\omega')^3(\ol{\omega})^2\nonumber\\[2mm]
&&+\left(\omega' P_\phi-\omega''\right)'(1-|\omega|^2)^2\equiv 0,\nonumber\\[1mm]\text{\it i.e.,}
&&\left(\omega' P_\phi-\omega''\right)'+\ol{\omega}\left(2\left(\omega'\left(\omega' P_\phi-\omega''\right)-3\omega'\omega''\right)-2\left(\omega' P_\phi-\omega''\right)'\omega\right)\nonumber\\
\label{eq9}&&+(\ol{\omega})^2\left(\left(\omega' P_\phi-\omega''\right)'\omega^2-2\left(\omega'\left(\omega' P_\phi-\omega''\right)-3\omega'\omega''\right)\omega-9(\omega')^3\right)\equiv 0\nonumber\\\eea
in $D(z_0, r)$. Differentiate (\ref{eq9}) with respect to $\ol{z}$, we obtain
\bea\label{eq10} &&\left(\left(\omega'\left(\omega' P_\phi-\omega''\right)-3\omega'\omega''\right)-\left(\omega' P_\phi-\omega''\right)'\omega\right)+\ol{\omega}\left(\left(\omega' P_\phi-\omega''\right)'\omega^2\right.\nonumber\\
&&\left.-2\left(\omega'\left(\omega' P_\phi-\omega''\right)-3\omega'\omega''\right)\omega-9(\omega')^3\right)\equiv 0\eea
in $D(z_0, r)$. Differentiate (\ref{eq10}) with respect to $\ol{z}$, we obtain
\bea\label{eq11}\left(\omega' P_\phi-\omega''\right)'\omega^2-2\left(\omega'\left(\omega' P_\phi-\omega''\right)-3\omega'\omega''\right)\omega-9(\omega')^3\equiv 0\eea
in $D(z_0, r)$. Using (\ref{eq11}) in (\ref{eq10}) and (\ref{eq9}), we have, respectively 
\bea\label{eq12}&&\left(\omega'\left(\omega' P_\phi-\omega''\right)-3\omega'\omega''\right)-\left(\omega' P_\phi-\omega''\right)'\omega\equiv 0\\\text{and}
\label{eq13}&&\left(\omega' P_\phi-\omega''\right)'\equiv 0\quad\text{in}\quad D(z_0,r).\eea
From (\ref{eq12}) and (\ref{eq13}), we have $\left(\omega' P_\phi-\omega''\right)-3\omega''\equiv 0$ in $D(z_0,r)$. Thus, from (\ref{eq11}), we have $\omega'\equiv 0$ in 
$D(z_0,r)$, which contradicts the fact that $\omega$ is not constant in $\mathbb{D}$. 
Thus, $\omega\equiv$ constant in $\mathbb{D}$. Conversely, if $\omega\equiv a\in\mathbb{D}$, then the Schwarzian derivative of $f$ is given by 
$S_f=S_\phi$, which is analytic and hence, harmonic in $\mathbb{D}$. This completes the proof.
\end{proof}
\begin{thm}
Let $f(z)=z^m|z|^{2\beta m} h(z)\ol{g(z)}$ be a locally univalent and sense-preserving log-harmonic mapping in $\mathbb{D}$ and $\phi(z)$ be an analytic and locally univalent function in $\mathbb{D}$ such that $\phi'(z)=(zh'+(\beta+1)m h)z^{(2\beta+1)m-1}g$.  If 
\bea\label{r2} |z P_{f}|+\frac{|z \omega'(z)|}{1-|\omega(z)|^2}\leq \frac{1}{(1-|z|^2)},\eea
 then $\phi$ is univalent in $\mathbb{D}$.
\end{thm}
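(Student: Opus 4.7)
The plan is straightforward: reduce the hypothesis to the classical Becker univalence criterion applied to $\phi$ using the decomposition of $P_f$ derived in \eqref{c4}.

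First, I would recall formula \eqref{c4}, which expresses the pre-Schwarzian of the log-harmonic mapping $f$ as
\[
P_f = P_\phi - \frac{\overline{\omega}\,\omega'}{1-|\omega|^2},
\]
where $\phi$ is precisely the analytic, locally univalent function whose derivative is $\phi' = (zh'+(\beta+1)m\,h)\,z^{(2\beta+1)m-1} g$. Solving for $P_\phi$ gives
\[
P_\phi = P_f + \frac{\overline{\omega}\,\omega'}{1-|\omega|^2}.
\]

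Next, I would multiply by $z$, apply the triangle inequality, and use $|\overline{\omega(z)}| \leq 1$ (since $\omega$ is a Schwarz function) to obtain
\[
|z P_\phi(z)| \;\leq\; |zP_f(z)| + \frac{|\overline{\omega(z)}|\,|z\omega'(z)|}{1-|\omega(z)|^2} \;\leq\; |zP_f(z)| + \frac{|z\omega'(z)|}{1-|\omega(z)|^2}.
\]
Invoking the hypothesis \eqref{r2}, the right-hand side is bounded above by $1/(1-|z|^2)$ for every $z\in\mathbb{D}$. Multiplying by $(1-|z|^2)$ yields
\[
\sup_{z\in\mathbb{D}}(1-|z|^2)\,|zP_\phi(z)| \;\leq\; 1.
\]

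Finally, since $\phi$ is locally univalent and analytic in $\mathbb{D}$, Becker's univalence criterion (recalled in the introduction, with the sharp constant $1$) ensures that $\phi$ is univalent in $\mathbb{D}$, which completes the argument.

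There is no real obstacle here — every step is either a direct substitution, the Schwarz–Pick bound $|\omega|\leq 1$, or an invocation of a classical result. The only delicate point is making sure the formula \eqref{c4} really is available in this generality and that $\phi$ so defined is genuinely locally univalent; both are guaranteed by the assumption that $f$ is locally univalent and sense-preserving, which forces $HG=\phi'\neq 0$ on $\mathbb{D}$, as already observed in the derivation preceding \eqref{c3}.
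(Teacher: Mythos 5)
Your proposal is correct and follows essentially the same route as the paper's own proof: isolate $P_\phi$ from the decomposition $P_f = P_\phi - \overline{\omega}\,\omega'/(1-|\omega|^2)$, bound $|zP_\phi|$ via the triangle inequality and $|\omega|\le 1$, and invoke Becker's criterion. If anything, your write-up is slightly more careful than the paper's (which writes an equality where the triangle inequality is meant), so there is nothing to add.
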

\begin{proof} The pre-Schwarzian derivative of $f$ is given by $P_f=P_\phi-\ol{\omega}\omega'/\left(1-|\omega|^2\right)$,
where $\omega$ is the second complex dilatation of $f$, which is also a Schwarz function in $\mathbb{D}$. Using (\ref{r2}), we have 
\beas \left|z P_\phi\right|=\left|z P_f\right|+\left|\frac{z\ol{\omega}\omega'}{1-|\omega|^2}\right|\leq \left|z P_f\right|+\frac{\left|z\omega'\right|}{1-|\omega|^2}\leq \frac{1}{1-|z|^2}.\eeas
Therefore, we have $\sup_{z\in\mathbb{D}}|zP_\phi|\leq 1$. In view of the Becker's univalence criterion, $\phi$ is univalent in $\mathbb{D}$. This completes the proof.
\end{proof}
\noindent{\bf Declarations}\\
\noindent{\bf Acknowledgment:} The work of the first author is supported by University Grants Commission (IN) fellowship (No. F. 44 - 1/2018 (SA - III)).\\
{\bf Conflict of Interest:} The authors declare that there are no conflicts of interest regarding the publication of this paper.\\
{\bf Availability of data and materials:} Not applicable.


\begin{thebibliography}{99}


\bibitem{A1996} {\sc Z. Abdulhadi}, Close-to-starlike logharmonic mappings, {\it Internat. J. Math. \& Math. Sci.}, {\bf 19}(3) (1996), 563--574.

\bibitem{AB1988}{\sc Z. Abdulhadi} and {\sc D. Bshouty}, Univalent functions in $H\cdot\overline{H}(D)$, {\it Tran. Amer. Math. Soc.}, {\bf 305}(2) (1988), 841--849.

\bibitem{AH1987} {\sc Z. Abdulhadi} and {\sc W. Hengartner}, Spirallike logharmonic mappings, {\it Complex Var. Theory Appl.}, {\bf 9}(2-3) (1987), 121--130.

\bibitem{AA2012}{\sc Z. Abdulhadi} and {\sc R.M. Ali}, Univalent logharmonic mappings in the plane, {\it Abstr. Appl. Anal.}, {\bf 2012} (2012), Art. ID 721943, 32 pp.

\bibitem{AP2024}{\sc M. F. Ali} and {\sc S. Pandit}, On the pre-Schwarzian norm of certain logharmonic mappings, {\it Bull. Malays. Math. Sci. Soc.}, {\bf47} (2024), 67.

\bibitem{ACP1974}
{\sc J. Anderson, J. Clunie} and {\sc C. Pommerenke}, On Bloch functions and normal functions, Walter de Gruyter, Berlin/New York Berlin, New York, (1974), 12--37.

\bibitem{B1972}
{\sc J. Becker}, L\"{o}wnersche Differentialgleichung und quasikonform fortsetzbare schlichte Funktionen, {\it J. Reine Angew. Math.}, {\bf 255} (1972), 23--43.

\bibitem{BP1984}
{\sc J. Becker} and {\sc C. Pommerenke}, Schlichtheitskriterien und Jordangebiete, {\it J. Reine Angew. Math.}, {\bf 354} (1984), 74--94.

\bibitem{BHPV2022}
{\sc V. Bravo, R. Hern{\'a}ndez, S. Ponnusamy} and {\sc O. Venegas}, Pre-Schwarzian and Schwarzian derivatives of logharmonic mappings, {\it Monatsh. Math.}, {\bf199} (2022),  733--754.

\bibitem{CDO2003}
{\sc M. Chuaqui, P. Duren} and {\sc B. Osgood}, The Schwarzian derivative for harmonic mappings, {\it J. Anal. Math.}, {\bf 91}(1) (2003), 329--351.

\bibitem{D1983}{\sc P. L. Duren}, Univalent Functions, Springer-Verlag, 1983.
\bibitem{D2004} {\sc P. Duren}, Harmonic mapping in the plane, {\it Cambridge University Press}, 2004.


\bibitem{HM2015}{\sc R. Hern{\'a}ndez, M. J. Mart\'in},  Pre-Schwarzian and Schwarzian derivatives of harmonic mappings, {\it J. Geomet. Anal.}, {\bf 25}(1)  (2015), 64--91.
\bibitem{H1949}{\sc E. Hille}, Remarks on a paper by Zeev Nehari, {\it Bull. Am. Math. Soc.} {\bf55} (1949), 552--553.

\bibitem{K1932}
{\sc W. Kraus}, Uber den Zusammenhang eigner Characterstiken eines einfach zusammenhangenden Bereiches mit der Kreisabbildung, {\it Mitt. Math. Sem. Giessen}, {\bf 21}  (1932),  1--28.
\bibitem{KS2002} {\sc Y. C. Kim} and {\sc T. Sugawa}, Growth and coefficient estimates for uniformly locally univalent functions on the unit disk, {\it Rocky Mountain J. Math.}, {\bf 32}  (2002), 179--200.
\bibitem{L1936} {\sc H. Lewy}, On the non-vanishing of the Jacobian in certain one-to-one mappings, {\it Bull. Am. Math. Soc.}, 42 (1936), 689--692.

\bibitem{1LP2018}
{\sc Z. Liu} and {\sc S. Ponnusamy}, Some properties of univalent log-harmonic mappings, {\it Filomat}, {\bf 32}(15) (2018), 5275--5288.

\bibitem{2LP2018}{\sc G. Liu} and {\sc S. Ponnusamy}, Uniformly locally univalent harmonic mappings associated with the pre-Schwarzian norm, {\it 	Indag. Math.}, {\bf 29}(2) (2018), 752--778.
\bibitem{LP2022}
{\sc Zh. Liu} and {\sc S. Ponnusamy}, On univalent log-harmonic mappings, {\it Filomat}, {\bf 36}(12) (2022), 4211--4224.
\bibitem{MPW2013}
{\sc Z Mao, S. Ponnusamy} and {\sc X. Wang}, Schwarzian derivative and Landau's theorem for log-harmonic mappings, {\it Complex Var. Elliptic Equ.}, {\bf 58}(8) (2013), 1093--1107.

\bibitem{N1949}{\sc Z. Nehari}, The Schwarzian derivative and schlicht functions, {\it Bull. Amer. Math. Soc.}, {\bf 55}(6) (1949), 545--551.
\bibitem{N1989}{\sc J. C. C. Nitsche}, Lectures on minimal surfaces: Volume $1$, {\it Cambridge University Press}, NewYork, 1989.

\bibitem{P1970}
{\sc C. Pommerenke}, On Bloch functions, {\it J. London Math. Soc.}, {\bf 2}(2) (1970), 689--695.

\bibitem{Y1976}
{\sc S. Yamashita}, Almost locally univalent functions, \textit{Monatsh. Math.}, {\bf 81} (1976), 235--240.

\end{thebibliography}
\end{document}